\documentclass[12pt, twoside, leqno]{article}
\usepackage{amsmath,amsthm}
\usepackage{amssymb}
\usepackage{enumitem}
\usepackage{graphicx}
\pagestyle{myheadings}
\markboth{Y. Muranov, A. Szczepkowska and V. Vershinin}{Homology of weighted path complexes and directed hypergraphs}
\newtheorem{theorem}{Theorem}[section]

\newtheorem{lemma}[theorem]{Lemma}
\newtheorem{proposition}[theorem]{Proposition}

\theoremstyle{definition}
\newtheorem{definition}[theorem]{Definition}

\newtheorem{example}[theorem]{Example}


\numberwithin{equation}{section}


\begin{document}

\title{Homology of weighted path complexes and directed hypergraphs}

\author{ Yuri Muranov,  
Anna Szczepkowska, 
and 
Vladimir Vershinin
}

\date{}

\maketitle

\begin{abstract}
We introduce  the   weighted path homology  on the category of weigh\-ted directed hypergraphs
 and describe  conditions 
of homotopy invariance of weighted path homology groups. We give several examples that 
explain the nontriviality of the introduced notions. 
\end{abstract}

\textbf{AMS Mathematics Subject Classification 2020:} 18G90, 55N35, 05C20, 05C22, 
05C25,  05C65,    55U05

\textbf{Keywords}: hypergraph, weighted path complex,  category of directed hypergraphs, 
path homology, digraph, homotopy of directed hypergraphs, weighted path homology

\section{Introduction}\setcounter{equation}{0}\label{S1}

One of the  tools for the investigation of  global structures in graph theory 
are  (co)homology  theories which were constructed recently by many authors for various  
categories  of (di)graphs, multigraphs, quivers, and  hypergraphs  \cite{Talbi},  \cite{Forum}, \cite{Parks}, 
 \cite{Graham}, \cite{Betti}, \cite{Hyper},   \cite{Embed}, and \cite{Miarx}.  

The notion of directed hypergraphs arises in 
discrete mathematics as a  natural generalization of digraphs and hypergraphs \cite{Berge} and   \cite{Gallo}.   
 At present time there are many mathematical models of various   complex systems  in which the system is presented by a network  whose interacting pairs of nodes are connected by links.  Such models are naturally presented by hypergraphs with additional structures \cite{Bat}. The notions of path, cycle and weight structure on hypergraphs    arise naturally for the  directed hypergraphs. These consepts  allow to investigate  global structures on hypergraphs  by methods of algebraic topology and, in particular, homology theory. 
The excellent  survey of application  of directed hypergraphs to various aspects of  computer sciences is given in paper  \cite{Sur}. 
 The application of  homology  theory of hypergraphs in the  pharmaceutical industries is described, for example,  in \cite{Cang}, \cite{Liu}.

In the present paper,  we introduce several categories  of weighted directed hypergraphs and the notion 
of homotopy in these categories.  Then we define
 functorial homology theories  on such categories
 using the  homology theory of path complexes  constructed in  \cite{Hyper},      
 \cite{Pcomplex},  and \cite{Forum}. We describe also the conditions of homotopy 
 invariance of introduced homology groups 
and we give several examples.  In the paper we consider only finite 
digraphs, path complexes and directed hypergraphs.

 \section{Path homology of weighted path complexes}\label{S2}
\setcounter{equation}{0}

 A \emph{path complex}  \cite[Sec. 3.1]{Pcomplex}  $\Pi=(V, P)$  consists of a set  
 $V$  of vertices and a set $P$  of \emph{elementary paths}    $ (i_{0}\dots i_n)$ 
 of vertices   such that  one-element sequence $(i)$ lies in $P$ and  if
  $(i_{0}\dots i_{n})\in P$   then
$(i_{0}...i_{n-1})\in P$,  $(i_{1}...i_{n})\in P$.  
The  \emph{length} of a path 
$(i_{0}...i_{n})$  is equal to $n$. We denote by $ \Pi_V=(V, P_V)$ the path complex 
for which $P_V$ consists of all paths of finite length on $V$.

 A \emph{morphism of path complexes} $f\colon (V,P)\to (W, Q)$ is given by the pair 
 of maps $(f_V,f_P)$ where $f_V\colon V\to W$ 
 and    $f_P(i_{0}\dots i_{n})\colon =\left(f_V(i_{0})\dots f_V(i_{n})\right)$  lays in  $P$.   
 Thus we obtain a  category  $\mathcal P$  whose objects are path complexes  and 
 whose morphisms are morphisms of path complexes.

Let $J=\{0,1\}$  be the  two-element set.  For any set $V=\{0,\dots, n\}$, let $V\times J$ 
be as usual the Cartesian product.
Let $V^{\prime}$ be a copy of the set $V$ with the elements $\{0^{\prime},\dots, n^{\prime}\}$ 
where $i^{\prime}\in V^{\prime}$ corresponds to $i\in V$. Then we can  identify $V\times J$  
with  $V\coprod V^{\prime}$ in  such 
a way that  
$(i,0)$ corresponds to $i$ and $(i,1)$ corresponds to $i^{\prime}$ for all $i\in V$.
Thus $V$ is identified with $V\times \{0\}\subset V\times J$ and  $V^{\prime}$ is 
identified with $V\times \{1\}\subset V\times J$. 
 The natural bijection $V\cong V^{\prime}$ defines the set of path  $P^{\prime}$ on 
 the set $V^{\prime}$  by the  condition: $(i_0^{\prime}\dots i_n^{\prime})\in P^{\prime}$ iff 
$(i_0\dots i_n)\in P$. We   define  a path complex $\Pi^{\uparrow}=(V\times J, P^{\uparrow})$  where  
\begin{equation*}
\begin{matrix}
P^{\uparrow}=P\cup P^{\prime}\cup P^{\#}, \\
{P^\#}=\{ q^\#_k=(i_0\dots i_k \,i_k^{\prime} , i^{\prime}_{k+1}\dots 
i_n^{\prime})\, | \, q=(i_0\dots i_k \, i_{k+1}\dots i_n)\in P \}.\\
\end{matrix}
\end{equation*}
There are   morphisms 
$
i_{\bullet}\colon \Pi \to \Pi^{\uparrow}
$
and $j_{\bullet}\colon \Pi \to \Pi^{\uparrow}$
which are  induced by the natural inclusion of $V$ in $V\times \{0\}$ and  in   $V\times\{1\}$, respectively.

\begin{definition}\label{d2.1} \rm i) A path complex $\Pi=(V, P)$ is called 
\emph{weighted} if it is equipped with a function $\delta_V \colon V\to R$ where $R$ is a unitary commutative ring. 
This function is called the \emph{weight function}. 
We denote such a path complex  by  $\Pi^{\delta}=(V, P, \delta_V)$.

ii) Let $(V, P,\delta_V)$  and $(W, S, \delta_{W})$  be weighted path complexes. A morphism    
$f\colon (V, P)\to (W,S)$ of   path complexes  is a  \emph{weighted morphism}   if 
$\delta_{W}(f_V(v))=\delta_V(v)$ for every  $v\in V$. We denote such a morphism by $f^{\delta}$.
\end{definition}

Weighted path complexes with  weighted morphisms 
form a category we denote by $\mathcal P^{\delta}$. 

Let $\Pi^{\delta}=(V, P,\delta_V)$  be a weighted path complex. 
Then the path complex $\Pi^{\uparrow}$  has 
the natural structure of the weighted path complex 
${\Pi^{\uparrow}}^{\delta}$ with the weighted
function $\delta_{V\times J}(v,0)=\delta_{V\times J}(v,1)=\delta_{V}(v)$. 

\begin{definition}\label{d2.2} \rm i) We call weighted morphisms 
$f^{\delta}, g^{\delta}\colon \Pi^{\delta}\to \Sigma^{\delta}$  of 
weighted path complexes 
\emph{weighted one-step homotopic}   if 
 morphisms $f,g$  are  one-step homotopic (see \cite[Def. 3.1]{Hyper}) and the  homotopy $F\colon { \Pi^{\uparrow}}\to \Sigma$ 
 is a weighted morphism.  

ii)  Two weighted morphisms  $f^{\delta}, g^{\delta}\colon \Pi^{\delta}\to \Sigma^{\delta}$  
of weighted path complexes   are \emph{homotopic} $f^{\delta}\simeq  f^{\delta}$  if there 
exists a sequence of weighted morphisms 
$$
f^{\delta}=f_0^{\delta},f_1^{\delta},\dots, f_n^{\delta}=g^{\delta}\colon \Pi^{\delta}\to \Sigma^{\delta}
$$ 
such that any two consequent morphisms are one-step homotopic. 
\end{definition}

Thus we obtain   homotopy category $h{\mathcal P^{\delta}}$ whose objects are weighted 
path complexes and  morphisms 
 are classes of weighted homotopic weighted  morphisms.  

\begin{proposition}\label{p2.3} Let 
	$f^{\delta}, g^{\delta}\colon  \Pi^{\delta}\rightarrow \Sigma^{\delta}$ be weighted 
	morphisms of weighted path complexes which are one-step homotopic  as morphisms of 
	path complexes.    Then   morphisms $f^{\delta}$ and $g^{\delta}$ are weighted  
one-step homotopic. 
\end{proposition}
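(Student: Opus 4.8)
The plan is to show that the \emph{same} morphism $F$ that realizes the one-step homotopy at the level of path complexes is already a weighted morphism, so that no new homotopy needs to be constructed. By hypothesis $f$ and $g$ are one-step homotopic as morphisms of path complexes, which (by \cite[Def. 3.1]{Hyper}) means there is a morphism $F\colon \Pi^{\uparrow}\to \Sigma$ with $F\circ i_{\bullet}=f$ and $F\circ j_{\bullet}=g$. According to Definition \ref{d2.1}ii), a morphism of path complexes is weighted precisely when its vertex map preserves weights; there is no additional condition on paths. Hence the only thing I would need to verify is that the vertex map $F_V\colon V\times J\to W$ satisfies $\delta_{W}(F_V(w))=\delta_{V\times J}(w)$ for every vertex $w\in V\times J$, where I write $\Sigma^{\delta}=(W,S,\delta_{W})$.

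First I would use the fact that every vertex of $\Pi^{\uparrow}$ lies in one of the two copies $V\times\{0\}$ or $V\times\{1\}$, and that the vertex map of $F$ is completely pinned down there by the relations $F\circ i_{\bullet}=f$ and $F\circ j_{\bullet}=g$. Concretely, $F_V(v,0)=f_V(v)$ and $F_V(v,1)=g_V(v)$ for all $v\in V$. Recalling that the weight on the cylinder is $\delta_{V\times J}(v,0)=\delta_{V\times J}(v,1)=\delta_V(v)$, the verification then splits into two cases. On $V\times\{0\}$ one gets $\delta_{W}(F_V(v,0))=\delta_{W}(f_V(v))=\delta_V(v)=\delta_{V\times J}(v,0)$, where the middle equality is exactly the statement that $f^{\delta}$ is a weighted morphism. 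On $V\times\{1\}$ the same computation with $g$ in place of $f$ gives $\delta_{W}(F_V(v,1))=\delta_{W}(g_V(v))=\delta_V(v)=\delta_{V\times J}(v,1)$, using now that $g^{\delta}$ is weighted. Thus $F_V$ preserves weights on all vertices, so $F$ is a weighted morphism $F^{\delta}\colon {\Pi^{\uparrow}}^{\delta}\to\Sigma^{\delta}$.

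Combining this with the assumption that $f,g$ are one-step homotopic as morphisms of path complexes, both requirements of Definition \ref{d2.2}i) are met, so $f^{\delta}$ and $g^{\delta}$ are weighted one-step homotopic, as claimed. I do not expect a genuine obstacle here: the statement is essentially immediate once the definitions are unwound, the only point requiring care being the observation that weightedness is a purely vertex-level condition and that the vertex map of any one-step homotopy factors through the two copies of $V$ as $f_V$ and $g_V$ respectively. The hypothesis that $f^{\delta}$ and $g^{\delta}$ are themselves weighted is exactly what is needed to control these two copies, and there are no intermediate vertices on the cylinder where the weight condition could fail.
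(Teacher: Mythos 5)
Your proof is correct and is exactly the argument the paper has in mind: the paper's one-line proof (``it follows from the definition of the weight function $\delta_{V\times J}$'') is precisely your observation that $F_V(v,0)=f_V(v)$ and $F_V(v,1)=g_V(v)$ preserve weights because $f^{\delta}$, $g^{\delta}$ are weighted and $\delta_{V\times J}(v,0)=\delta_{V\times J}(v,1)=\delta_V(v)$. You have merely written out the case check that the paper leaves implicit.
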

\begin{proof} It follows from the definition of the weight function 
$\delta_{V\times J}$. 
\end{proof}

Let $\Pi^{\delta}=(V,P,\delta_V)$.  We define the weighted  path homology groups 
$H^{\mathbf w}_*(\Pi^{\delta})$ with coefficients in 
a   unitary   commutative ring $R$  similarly to the regular  path homology groups 
\cite[Sec. 2]{Hyper}.   For $n\geq 0$, let  $\mathcal R_n^{reg}(V)$  be a module  generated by all  paths 
$(i_0\dots i_n)$  on the set $V$ which are \emph{regular}, that is $i_k\ne i_{k+1}$ for $0\leq k \le n-1$ and  
$\mathcal R_{-1}^{reg}(V)=0$. Let  $\mathcal R_n^{reg}(P)\subset \mathcal R_{n}^{reg}(V)$ be submodules 
generated by paths from $P$ for $n\geq 0$ and $\mathcal R_{-1}^{reg}(P)=0$.  
We define  a \emph{weighted boundary homomorphism} 
$$
\partial_n^w:\mathcal R_{n}(V)\rightarrow \mathcal R_{n-1}(V)
$$
by putting $\partial_{0}^{w}=0$ and  
$$
\partial^w_n( e_{i_{0}...i_{n}})=\sum_{s=0}^{n}
\left( -1\right)^s\delta_V(i_s)
e_{i_{0}...\widehat{i_{s}}...i_{n}} 
$$
 for  $n\geq 1$, where  $\widehat{i_{s}}$ means omission of the index $i_{s}$. 
Then $\left(\partial^{w}\right)^2=0$ \cite[\S 3]{MiHH}. 
Let us define  $R$-modules $\Omega _{*}^{w}=\Omega _{*}^{w}(\Pi^{\delta})$ 
by setting: $\Omega^w_{-1}=0$ and  
$$
\Omega _{n}^{w}=\left\{ v\in \mathcal{R}^{reg}
_{n}(P)|\partial^w v\in \mathcal{R}_{n-1}^{reg}(P)\right\}\  \text{for}\  n\geq 0.  
$$
Modules   $\Omega_n^w$ with  the differential induced by  $\partial^w $   form  a chain
 complex.
Homology groups $H_*^{\mathbf w}(\Pi^{\delta})$ of this complex 
 are called  \emph{weighted path homology
groups}.  

Recall that a digraph $V=(V_G, E_G)$ is called \emph{weighted} if it is equipped a weight function $\delta_{V_G}\colon V\to R$.
Let $G^{\delta}=(V_G, E_G, \delta_{V_G})$ be a  weighted digraph without  
loops. We define a weighted path complex  
$\mathfrak D^{w}(G)=(V_G, P_G, \delta_{V_G})$ in which  
 a path $(i_0\dots i_n)$ on the set $V_G$ lies in $P_G$ iff  for $k=1,\dots, n$  every pair of consequent vertices
$i_{k-1}, i_k$ lies in  the edge 
$(i_{k-1}\to i_k) \in E_G$. Thus one gets a functor $\mathfrak D^{w}$
from the category of  weighted digraphs $\mathcal D^{\delta}$   to the category of  
weighted path complexes 
$\mathcal P^{\delta}$.  We define the \emph{weighted path homology} of the weighted digraph by 
$$
H_*^{\mathbf w}(G^{\delta})\colon = H_*^{\mathbf w}\left(\mathfrak D^w\left(G^{\delta}\right) \right).
$$

\begin{example}\label{e2.4} \rm  We give two examples of weighted path complexes 
	$\Pi^{\delta}$ for which    the regular  homology groups $H_*(\Pi)$ and 
	$H_*^{\mathbf w}(\Pi^{\delta})$  with coefficients in $\mathbb Z$ are non-isomorphic.

i) Let
 $\Pi^{\delta}=(V,P, \delta_V)$ with
$V=\{a,b,c,d\}, \, P=\{a,b,c,d, ac,ad,bc,bd\}$ and
$\delta_V(a)=\delta_V(b)=1, \, \delta_V(c)=\delta_V(d)=0$. 
The path homology groups of 
$\Pi=(V,P)$ coincide with the regular path homology groups 
of the digraph
$$
\begin{matrix}
&& c&&\\
  &\nearrow&&\nwarrow&\\
a& &   && b\\
  &\searrow&&\swarrow&\\
  &&d&& .\\
\end{matrix}
$$ 
We have 
$$H_i(\Pi)=\begin{cases} \mathbb Z& \text{for} \  i=0,1\\
                                0& \text{for} \  i\geq 2 
\end{cases}
$$ 
and 
$$
H_i^{\mathbf w}(\Pi^{\delta})=\begin{cases} \mathbb Z^2& \text{for} \  i=0,1\\
                                0& \text{for} \  i\geq 2. 
\end{cases}
$$ 

ii) Let
$\Pi^{\delta}=(V,P, \delta_V)$ with
$
V=\{a,b\}, \, P=\{a,b\}, 
$
and 
$ 
\delta_V(a)=2, \delta_V(b)= 4
$.
The path homology groups of 
$\Pi=(V,P)$ coincide with the regular path homology groups 
of the digraph $a\to b$. We have 
$$
H_i(\Pi)=\begin{cases} \mathbb Z& \text{for} \  i=0\\
                                0& \text{for} \  i\geq 1 
\end{cases}
$$ 
and 
$$
H_i^{\mathbf w}(\Pi^{\delta})=\begin{cases} \mathbb Z\oplus \mathbb Z_2& \text{for} \  i=0\\
                                0& \text{for} \  i\geq 1. 
\end{cases}
$$
\end{example}

Let us fix a weighted path complex  $\Pi^{\delta}=(V,P,\delta)$ with 
a weighted function $\delta\colon V\to R$  such that $\delta(i)$ is 
an invertible element for all $i\in V$.
We define a function $\gamma\colon V\to R$ by putting $\gamma(i)=(\delta(i))^{-1}$. 
Recall that for the weighted path complex 
 ${\Pi^{\uparrow}}^{\delta}=\left(V\times J, P^{\uparrow}, \delta_{V\times J}\right)$, 
we identify $V\times \{0\}$ with $V$,  
$V\times \{1\}$ with $V^{\prime}$   and, hence,  
$\delta_{V\times J}(v, 0)=\delta_{V\times J} (v,1)=\delta_V(v)=\delta_V(v^{\prime})$ for $v\in V$. 
 Thus, for a path $e_{i_0\dots i_n}\in \mathcal R_n^{reg}(P)$ and $0\leq k \leq n$,  we have 
$\delta(i_k)=\delta(i_k^{\prime}), 
\gamma(i_k)=\gamma(i_k^{\prime})
$. For $n\geq 0$, we define a homomorphism 
$
\tau\colon \mathcal R_n^{reg}(V)\to \mathcal R_{n+1}^{reg}(V\times J)
$
 on  basic  regular  $n$-paths
$v=e_{i_{0}...i_{n}}$  by 
\begin{equation}\label{2.1}
\tau(v)=\sum_{k=0}^{n}\gamma(i_k)\left( -1\right) ^{k}e_{i_{0}...i_{k}i_{k}^{\prime
}...i_{n}^{\prime }}.
\end{equation}
  We set $\mathcal R_{-1}^{reg}(V)=0$ and define $\tau=0\colon \mathcal R_{-1}^{reg}(V)\to \mathcal R_0^{reg}(V\times J)$.   

\begin{lemma}
\label{l2.5} For $n\geq 0$ and  any path $v\in \mathcal R _{n}^{reg}(V)$,   we have 
\begin{equation}\label{2.2}
\partial^w \tau(v)+\tau(\partial^w  v)=v^{\prime}- v.
\end{equation}
 \end{lemma}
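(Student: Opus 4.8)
The plan is to verify \eqref{2.2} directly on a basic regular path $v=e_{i_0\dots i_n}$ by expanding both terms on the left-hand side and matching summands. Computing $\partial^w\tau(v)$ means applying the weighted boundary to each path $e_{i_0\dots i_k i_k^{\prime}\dots i_n^{\prime}}$ occurring in \eqref{2.1}. I would sort the resulting summands into three groups according to the position of the deleted vertex: deletions inside the unprimed block $i_0\dots i_{k-1}$, deletions inside the primed block $i_{k+1}^{\prime}\dots i_n^{\prime}$, and the two ``junction'' deletions of the repeated vertex $i_k$ or $i_k^{\prime}$.

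The heart of the argument is the junction group. Deleting $i_k$ (at position $k$) contributes $\gamma(i_k)(-1)^k\cdot(-1)^k\delta(i_k)=\gamma(i_k)\delta(i_k)$ times $e_{i_0\dots i_{k-1}i_k^{\prime}\dots i_n^{\prime}}$, while deleting $i_k^{\prime}$ (at position $k+1$) contributes $-\gamma(i_k)\delta(i_k)$ times $e_{i_0\dots i_k i_{k+1}^{\prime}\dots i_n^{\prime}}$. Here the hypothesis that $\delta(i_k)$ is invertible with $\gamma(i_k)=\delta(i_k)^{-1}$ is exactly what is used: it forces $\gamma(i_k)\delta(i_k)=1$, so these two terms have coefficients $+1$ and $-1$. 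Setting $A_k=e_{i_0\dots i_{k-1}i_k^{\prime}\dots i_n^{\prime}}$ and $B_k=e_{i_0\dots i_k i_{k+1}^{\prime}\dots i_n^{\prime}}$, one checks the key coincidence $B_k=A_{k+1}$, so that the junction contribution $\sum_{k=0}^n(A_k-B_k)$ telescopes and leaves only $A_0-B_n=e_{i_0^{\prime}\dots i_n^{\prime}}-e_{i_0\dots i_n}=v^{\prime}-v$.

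It then remains to show that the two ``far'' groups of $\partial^w\tau(v)$ cancel precisely against $\tau(\partial^w v)$. Expanding $\tau(\partial^w v)$, each term of $\partial^w v$ deletes some $i_j$ with coefficient $(-1)^j\delta(i_j)$, after which $\tau$ inserts a primed copy at some junction $i_k$ with $k\ne j$, contributing a factor $\gamma(i_k)(-1)^{\bullet}$. A reindexing shows that the summand of $\tau(\partial^w v)$ with deleted vertex $i_j$ and junction at $i_k$ carries the same underlying path and the same weight $\gamma(i_k)\delta(i_j)$ as the corresponding far deletion of $\partial^w\tau(v)$, but with the opposite sign, the two differing precisely by the factor $-1$ coming from the unit shift in the position of the junction relative to the deleted vertex. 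Treating the unprimed and primed blocks separately yields a sign-reversing bijection between all far summands of $\partial^w\tau(v)$ and all summands of $\tau(\partial^w v)$, so these cancel in $\partial^w\tau(v)+\tau(\partial^w v)$; together with the telescoping this gives \eqref{2.2}.

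I expect the main obstacle to be the sign and index bookkeeping in this last step: one must set up the bijection carefully, checking in each block that the positions shift by exactly one so as to produce the required sign reversal. A minor subtlety is regularity: since $\partial^w$ and $\tau$ are given by explicit formulas and both send non-regular paths to non-regular paths, the entire computation may be performed on the free modules generated by all paths, where every basis element appears with total coefficient as above, and then pushed to $\mathcal R^{reg}$ by killing non-regular paths. No cancellation is disturbed, because matched terms always share the same (possibly non-regular) underlying path.
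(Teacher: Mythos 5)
Your proposal is correct and is essentially the paper's own proof: both expand $\partial^w\tau(v)$ and $\tau(\partial^w v)$ on a basis path, cancel all off-junction terms via the sign-reversing position-shift bijection, and reduce the junction terms with $\delta(i_k)\gamma(i_k)=1$ to a telescoping sum yielding $v^{\prime}-v$. Your closing remark on regularity (matched terms share the same underlying path, so passing to $\mathcal R^{reg}$ disturbs no cancellation) is a careful point the paper leaves implicit, but it does not change the route.
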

\begin{proof} It is sufficient
	 to prove the statement for basis elements 
$v=e_{i_{0}\dots i_{n}}$. For $n=0$ and $v=e_{i_0}\in 
\mathcal R^{reg}_0(V)$,
 we have 
$
\partial^w\tau(e_{i_0})=\partial^w(\gamma(i_0)e_{i_0i_0^{\prime}})= \gamma(i_0)\cdot \left(\delta({i_0})e_{i_0^{\prime}}-\delta({i_0}^{\prime})e_{i_0}\right)
=\gamma(i_0)\cdot \delta({i_0})[e_{i_0^{\prime}}-e_{i_0}]=
e_{i_0^{\prime}}-e_{i_0},  \  \tau( \partial^w v)=\tau(0)= 0,
$
and so (\ref{2.2})  is true.  Let  
$v=e_{i_{0}\dots i_{n}}\in \mathcal R^{reg}_{n}(V)$ with $n\geq 1$.  Then 
\begin{eqnarray*}
\partial^w (\tau(v)) &=&\partial^w \left( \sum_{k=0}^{n}( -1) ^{k}\gamma(i_k)e_{i_{0}\dots i_{k}i_{k}^{\prime}\dots  i_{n-1}^{\prime}i_{n}^{\prime }}\right) \\ 
&=& \sum_{k=0}^{n}(-1)^k\left(\sum_{m=0}^{k}(-1)^m
\delta(i_m)\gamma(i_k)e_{i_{0}\dots \widehat{i}_{m}\dots i_ki_{k}^{\prime}\dots i_{n}^{\prime }}\right)\\
&+& \sum_{k=0}^{n}(-1)^k \left(\sum_{m=k}^{n}(-1)^{m+1}\delta(i_m^{\prime})\gamma(i_k)
e_{i_{0}\dots  \dots i_ki_{k}^{\prime}\dots\widehat{i}_{m}^{\prime} \dots i_{n}^{\prime }}\right)\\
&=& \sum_{0\leq m\leq k\leq n}(-1)^{k+m}\delta(i_m)\gamma(i_k)
e_{i_{0}\dots \widehat{i}_{m}\dots i_ki_{k}^{\prime}\dots i_{n}^{\prime }}\\
&+& \sum_{0\leq k\leq m\leq n}(-1)^{k+m+1}\delta(i_m^{\prime})\gamma(i_k)
e_{i_{0}\dots  \dots i_ki_{k}^{\prime}\dots\widehat{i}_{m}^{\prime} \dots i_{n}^{\prime }}\  
\end{eqnarray*}
and
\begin{eqnarray*}
\tau (\partial^w v) &=&\tau \left( \sum_{m=0}^{n}
( -1) ^{m}\delta(i_m)
e_{i_{0}\dots \widehat{i}_{m}\dots  i_{n}}\right) \\ 
&=& \sum_{m=0}^{n}(-1)^m\left(\sum_{k=0}^{m-1}(-1)^k
\delta(i_m^{\prime})\gamma(i_k)e_{i_{0}\dots i_{k}i_{k}^{\prime} \dots  \widehat{i}_{m}^{\prime}\dots  i_{n}^{\prime }}\right)\\
&+ &\sum_{m=0}^{n}
( -1) ^{m}\left(\sum_{k=m+1}^{n}(-1)^{k-1}\delta(i_m)\gamma(i_k)
e_{i_{0}\dots  \dots \widehat{i}_{m} \dots i_ki_{k}^{\prime }\dots i_n^{\prime}}\right)\\
&=& \sum_{0\leq k<  m\leq n}(-1)^{k+m}\delta(i_m^{\prime})\gamma(i_k)
e_{i_{0}\dots i_{k}i_{k}^{\prime} \dots  \widehat{i}_{m}^{\prime}\dots  i_{n}^{\prime }}\\
&+ &\sum_{0\leq m<  k\leq n}(-1)^{k+m-1}\delta(i_m)\gamma(i_k)
e_{i_{0}\dots  \dots \widehat{i}_{m} \dots i_ki_{k}^{\prime }\dots i_n^{\prime}}\ .
\end{eqnarray*}
Hence
\begin{eqnarray*}
\partial^w \tau(v)+\tau(\partial^w  v)&=&\sum_{0\leq k\leq n}(-1)^{k+k}\underbrace{\delta(i_k)\gamma(i_k)}_{=1}
e_{i_{0}\dots \widehat{i}_{k}i_{k}^{\prime}\dots i_{n}^{\prime }}\\
&+ &\sum_{0\leq k\leq n}(-1)^{k+k+1}\underbrace{\delta(i_k^{\prime})\gamma(i_k)}_{=1}
e_{i_{0}\dots  \dots i_k \widehat{i}_{k}^{\prime}\dots i_{n}^{\prime }}\\
&=&\sum_{0\leq k\leq n}
e_{i_{0}\dots i_{k-1}i_{k}^{\prime}\dots i_{n}^{\prime }}- \sum_{0\leq k\leq n}
e_{i_{0}\dots  \dots i_k {i}_{k+1}^{\prime}\dots i_{n}^{\prime }}\\
&=&e_{i_0^{\prime}\dots i_n^{\prime}}+\sum_{1\leq k\leq n}
e_{i_{0}\dots i_{k-1}i_{k}^{\prime}\dots i_{n}^{\prime }}\\ 
&-&\sum_{0\leq k\leq n-1}
e_{i_{0}\dots  \dots i_k {i}_{k+1}^{\prime}\dots i_{n}^{\prime }}-e_{i_0\dots i_n}\\
&=&e_{i_0^{\prime}\dots i_n^{\prime}}-e_{i_0\dots i_n }\\
&+&\left( \sum_{0\leq k-1\leq n-1}
e_{i_{0}\dots i_{k-1}i_{k}^{\prime}\dots i_{n}^{\prime }}- \sum_{0\leq k\leq n-1}
e_{i_{0}\dots  \dots i_k {i}_{k+1}^{\prime}\dots i_{n}^{\prime }}\right)\\
&=&e_{i_0^{\prime}\dots i_n^{\prime}}-e_{i_0\dots i_n }= v^{\prime}-v.
\end{eqnarray*} 
\end{proof}

\begin{theorem}\label{t2.6}  Let 
$
f^{\delta}\simeq  g^{\delta}\colon\Pi^{\delta}=(V,P, \delta_V)\to (W, S, \delta_W) =\Sigma^{\delta}
$
be weighted homotopic morphisms of weighted path complexes such that the elements 
$\delta_V(i)\in R$ are invertible for all $i\in V$.  Then morphisms $f^{\delta}$ 
and $g^{\delta}$ induce  the chain homotopic 
morphisms of  weighted chain complexes 
$$
f_*^{\delta}\simeq g_*^{\delta}\colon \Omega_*^w(P)\to \Omega_*^w(S) 
$$
and  hence,  the  equal    homomorphisms of weighted path homology groups.
\end{theorem}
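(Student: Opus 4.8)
The plan is to reduce to the one-step case and then exhibit an explicit chain homotopy assembled from the operator $\tau$ of \eqref{2.1} and the chain map induced by the homotopy $F$. First I would note that both relations ``chain homotopic'' and ``equal on homology'' are transitive and stable under composition with chain maps, so by Definition~\ref{d2.2}(ii) it suffices to treat the case where $f^{\delta}$ and $g^{\delta}$ are weighted one-step homotopic. In that case there is a weighted morphism $F\colon \Pi^{\uparrow}\to\Sigma$ with $f=F\circ i_{\bullet}$ and $g=F\circ j_{\bullet}$, where $i_{\bullet},j_{\bullet}\colon\Pi\to\Pi^{\uparrow}$ are the two canonical inclusions.

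The heart of the argument is to check that $\tau$ restricts to a homomorphism $\tau\colon\Omega_{n}^{w}(P)\to\Omega_{n+1}^{w}(P^{\uparrow})$. For a basic path $v=e_{i_{0}\dots i_{n}}$ with $(i_{0}\dots i_{n})\in P$, every summand $e_{i_{0}\dots i_{k}i_{k}^{\prime}\dots i_{n}^{\prime}}$ of $\tau(v)$ is regular, since $i_{k}\in V$ and $i_{k}^{\prime}\in V^{\prime}$ are distinct vertices, and it lies in $P^{\#}\subseteq P^{\uparrow}$ by the definition of $\Pi^{\uparrow}$; hence $\tau$ maps $\mathcal R_{n}^{reg}(P)$ into $\mathcal R_{n+1}^{reg}(P^{\uparrow})$. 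To obtain membership in $\Omega_{n+1}^{w}$ I would invoke Lemma~\ref{l2.5}: if $v\in\Omega_{n}^{w}(P)$ then $\partial^{w}v\in\mathcal R_{n-1}^{reg}(P)$, so $\tau(\partial^{w}v)\in\mathcal R_{n}^{reg}(P^{\uparrow})$, and \eqref{2.2} rewrites as $\partial^{w}\tau(v)=v^{\prime}-v-\tau(\partial^{w}v)$, each term of which lies in $\mathcal R_{n}^{reg}(P^{\uparrow})$ because $v\in\mathcal R_{n}^{reg}(P)$ and $v^{\prime}\in\mathcal R_{n}^{reg}(P^{\prime})$ are both contained in $\mathcal R_{n}^{reg}(P^{\uparrow})$. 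Thus $\partial^{w}\tau(v)\in\mathcal R_{n}^{reg}(P^{\uparrow})$ and $\tau(v)\in\Omega_{n+1}^{w}(P^{\uparrow})$. This compatibility of $\tau$ with the regularity conditions defining $\Omega^{w}$ is the step I expect to be the main obstacle, and it is precisely where the invertibility of the weights enters, through the coefficient cancellations $\delta(i_{k})\gamma(i_{k})=1$ used in Lemma~\ref{l2.5}.

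Next I would record that a weighted morphism induces a chain map of weighted chain complexes: since $F$ preserves weights, the induced map $F_{*}$, sending each path to its image (or to $0$ when the image is non-regular), commutes with $\partial^{w}$ and carries $\Omega_{*}^{w}(P^{\uparrow})$ into $\Omega_{*}^{w}(S)$; the weight-preservation $\delta_{W}(F_{V}(u))=\delta_{V\times J}(u)$ is exactly what makes the coefficients of $\partial^{w}F_{*}$ and $F_{*}\partial^{w}$ agree. Likewise $i_{\bullet},j_{\bullet}$ induce chain maps $i_{*},j_{*}\colon\Omega_{*}^{w}(P)\to\Omega_{*}^{w}(P^{\uparrow})$ with $i_{*}(v)=v$ and $j_{*}(v)=v^{\prime}$. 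I then set $L:=F_{*}\circ\tau\colon\Omega_{n}^{w}(P)\to\Omega_{n+1}^{w}(S)$.

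Finally I would compute the homotopy identity. Because $F_{*}$ is a chain map, $\partial^{w}F_{*}=F_{*}\partial^{w}$, so
$$
\partial^{w}L+L\partial^{w}=F_{*}\left(\partial^{w}\tau+\tau\partial^{w}\right)=F_{*}(j_{*}-i_{*}),
$$
using Lemma~\ref{l2.5} in the form $\partial^{w}\tau+\tau\partial^{w}=j_{*}-i_{*}$. Since $f=F\circ i_{\bullet}$ and $g=F\circ j_{\bullet}$ yield $F_{*}i_{*}=f_{*}^{\delta}$ and $F_{*}j_{*}=g_{*}^{\delta}$, this gives $\partial^{w}L+L\partial^{w}=g_{*}^{\delta}-f_{*}^{\delta}$, so $L$ is the required chain homotopy and $f_{*}^{\delta}\simeq g_{*}^{\delta}$. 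Passing to homology, chain homotopic morphisms induce equal homomorphisms, which proves the theorem.
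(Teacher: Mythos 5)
Your proposal is correct and follows essentially the same route as the paper: reduce to the one-step weighted homotopy, use Lemma~\ref{l2.5} to verify that $\tau$ carries $\Omega_{n}^{w}(P)$ into $\Omega_{n+1}^{w}(P^{\uparrow})$, and take $L=F_{*}\circ\tau$ as the chain homotopy, with the identity $\partial^{w}L+L\partial^{w}=g_{*}^{\delta}-f_{*}^{\delta}$ following from \eqref{2.2}. Your write-up is in fact slightly more explicit than the paper's (e.g.\ justifying regularity of the summands of $\tau(v)$ and why $F_{*}$ is a chain map), but the decomposition and key lemma are identical.
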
 
\begin{proof}  It is sufficient to  consider the case of one-step weighted homotopy 
	 $F\colon {\Pi^{\uparrow}}^{\delta}\to \Sigma^{\delta}$  between $f^{\delta}$
	  and $g^{\delta}$.  By definition 
$
F_{*}\circ {[i_{\bullet}]}_*=f_{*}$ and
$F_{*}\circ {[j_{\bullet}]}_*=g_{*}$.
We define  a chain
homotopy 
$
L_{n}\colon \Omega _{n}^w(P)\rightarrow \Omega _{n+1}^w(S)
$
such that
\begin{equation*}
\partial^w L_{n}+L_{n-1}\partial^w =g_{\ast }-f_{\ast }.
\end{equation*}
Let the element $v\in \mathcal R_n^{reg}(P)$ belong to $\Omega_n^w(P)$. 
Then,  by definition,  $\tau(v)\in  \mathcal R_{n+1}^{reg}(P^{\uparrow})$.
To prove  that 
$\tau(v)\in \Omega_{n+1}^w(P^{\uparrow})$,  it is sufficient to check that $\partial^w\tau(v)\in 
\mathcal R_{n}^{reg}(P^{\uparrow})$.  By  definition,     
$\partial^w v\in \mathcal R_{n-1}^{reg}(P)\subset \Omega_{n-1}^w(V)$.   By   Lemma \ref{l2.5}
$
\partial^w\tau(v)= -\tau(\partial^w v)+v^{\prime}-v
$
where the right summands belongs to $\mathcal R_n^{reg}(P^{\uparrow})$. Hence, 
 $\tau(v)\in \Omega_{n+1}^w(P^{\uparrow})$.
For $n\geq 0$, we  define  homomorphisms  
$$
L_{n}(v)\colon \Omega_n^w(P)\to \Omega_{n+1}^w(S) 
$$
by $L_n(v)\colon= F_{\ast }\left( \tau(v)\right)$.  
Now, we check that $L_n$ is a chain homotopy:
\begin{eqnarray*}
(\partial^w L_{n}+L_{n-1}\partial^w )(v) &=&\partial^w (F_{\ast }(\tau (v) 
))+F_{\ast }(\tau(\partial^w v))  \\
&=&F_{\ast }\left( \partial^w \tau(v)\right) +F_{\ast }(\tau(\partial^w v)) \\
&=&F_{\ast }(\partial^w \tau(v)+\tau(\partial^w v)) \\
&=&F_{\ast }\left( v^{\prime }-v\right)
=g_{\ast }^{\delta}\left( v\right) -f_{\ast }^{\delta}\left( v\right) .
\end{eqnarray*}
\end{proof}

 \section{Path homology  of weighted directed  hypergraphs}\label{S3}
\setcounter{equation}{0}

\begin{definition} \label{d3.1}\rm   i) A \emph{directed hypergraph} $G=(V,E)$ consists 
	of a finite  set of \emph{vertices} $V$  and a   set of \emph{arrows}
	$E=\{\mathbf e_1,\dots , \mathbf e_n\}$   where $\mathbf e_i\in E$    
	is an ordered pair $(A_i,B_i)$  of disjoint non-empty subsets  of 
	$V$  such that 
	$V=\bigcup_{\mathbf e_i\in  E} (A_i\cup B_i)$. 
	The set $A=\mbox{orig }(A\rightarrow B)$ is called 
	the \emph{origin} of the arrow and the set  $B=\mbox{end}(A\rightarrow B)$
	is called the \emph{end} of the arrow. The elements of $A$ are called 
	the \emph{initial} vertices of $A\to B$ and  the elements of $B$ are 
	called its \emph{terminal} vertices.  

ii) A directed hypergraph  $G=(V, E)$ is called 
\emph{weighted} if it is equipped with a function $\delta_V \colon V\to R$ 
where $R$ is a unitary commutative ring. 
This function is called the \emph{weight function}. 
We denote such a directed hypergraph   by  $G^{\delta}=(V, E, \delta_V)$.
\end{definition}

For a  set  $X$  let ${\mathbf{P}}(X)$,  denote as usual its power set. 
We define  a set 
$
\mathbb P(X)\colon =  \{{\mathbf{P}}(X)\setminus \emptyset\}\times  
\{{\mathbf{P}}(X)\setminus \emptyset\}.
$  
Every  map  $f\colon V\to W$ induces a map 
$\mathbb P(f)\colon\mathbb P(V)\to \mathbb P(W)$. 
For a directed hypergraph $G=(V,E)$,   by Definition \ref{d3.1}, 
we have the natural map  $\varphi_G\colon E\to \mathbb P(V)$ 
defined by  $\varphi_G(A\to B)\colon =(A,B)$. 
Let $X\subset V$ be a subset of the set of vertices of a 
weighted directed hypergraph $G^{\delta}$. 
Define  the \emph{weight}  $|X|$ of a set $X$ by setting 
$$
|X|\colon = \sum\limits_{x\in X}\delta_V(x).
$$

Let $G=(V_G,E_G)$  and
$H=(V_H,E_H)$  be two  directed hypergraphs.
By \cite[Def. 2.1]{Miarx},  
the  morphism    
$f\colon G\to H$   is given by a pair of maps  $f_V\colon V_G\to V_H$ 
and $f_E\colon E_G\to E_H$ such that 
the following diagram 
\begin{equation}\label{3.1}
\begin{matrix}
E_G&\overset{\varphi_G}\longrightarrow &\mathbb P(V_G)\\
\ \ \downarrow f_E&&\ \  \downarrow \mathbb P(f_V)\\
E_H&\overset{\varphi_H}\longrightarrow &\mathbb P(V_H)\\
\end{matrix}
\end{equation}
is commutative.  Let $G^{\delta}=(V_G,E_G, \delta_{V_G})$  and
$H^{\delta}=(V_H,E_H, \delta_{V_H})$  be two weighted  
directed hypergraphs and let   $f\colon G\to H$   be a morphism. 

\begin{definition}\label{d3.2} \rm   i) The  morphism $f$ is called
\emph{vertex-weighted}  
if $\delta_{V_H}(f_V(v))=\delta_{V_G}(v)$ for every  $v\in V_G$. 
We denote such morphism by $f^{v}$.

ii)   The morphism $f$  is called
\emph{edge-weighted}  
if  for every $(A\to B)\in E_G$ and 
$f_E(A\to B)=A^{\prime}\to B^{\prime}$ the conditions  
$|A|=|A^{\prime}|, \ |B|=|B^{\prime}|$ are satisfied.

iii)  The  morphism $f$ is  called \emph{strong-weighted}  
if it is  \emph{vertex-weighted}  and \emph{edge-weighted}  
simultaneously. We denote such a morphism by $f^{s}$. 
\end{definition}
Thus,  we obtain categories     $\mathcal{DH}^{v}$,   
$\mathcal{DH}^{e}$, and  $\mathcal{DH}^{s}$ of  
vertex-weighted, edge-weighted, and  strong-weighted 
directed hypergraphs, respectively.

Let $G=(V_G,E_G)$ be a directed hypergraph.
We define 
subsets 
${\mathbf{P}}_0(G)$, ${\mathbf{P}}_1(G)$, and 
$\mathbf P_{01}={\mathbf{P}}_{0}(G)\cup {\mathbf{P}}_{1}(G)$ of 
${\mathbf{P}}(V_G)\setminus \emptyset$ by
setting
\begin{equation*}\label{3.0}
\begin{matrix}
{\mathbf{P}}_0(G)=\{A\in  {\mathbf{P}}(V_G)\setminus \emptyset| \exists 
B\in  {\mathbf{P}}(V_G)\setminus \emptyset:  \ A\to B\in E_G\}, \\
{\mathbf{P}}_1(G)=\{B\in  {\mathbf{P}}(V_G)\setminus \emptyset| \exists 
A\in  {\mathbf{P}}(V_G)\setminus \emptyset: \ A\to B\in E_G\}. \\
\end{matrix}
\end{equation*} 

We define a digraph 
$
\mathfrak N(G)= (V^n_G, E^n_G)
$
where  
$
V^n_G=\{ C\in {\mathbf{P}}(V)\setminus \emptyset | C\in {\mathbf{P}}_{01}(G)\}
$
  and   
$
E^n_G=\{A\to B| (A\to B)\in E\}.
$
 Note that   $\mathfrak N$ is a functor  from  the category $\mathcal{DH}$ 
of directed hypergraphs  \cite{Miarx}
to  the category  $\mathcal D$  of  digraphs.
 We equip the digraph $\mathfrak N(G)=(V^n_G, E^n_G)$  with a structure of a
\emph{weighted digraph} by defining   a weight  function   in the following way 
$\delta_{V^n_G} (A)\colon \overset{def}=|A|$  where $|A|$ 
is the weight of the set   $A$. We  denote such a weighted digraph by $\mathfrak N^{w}(G)$. 

\begin{proposition}\label{p3.3} Every edge-weighted morphism of  weighted directed hypergraphs 
$
f^e\colon G^{\delta}\to H^{\delta}
$
defines a morphism  of weighted digraphs 
$$
[\mathfrak N^{w} (f)]=(f_{V}^n, f_{E}^n) \colon (V_G^n,E^n_G,  \delta_{V^n_G})\to (V_H^n, E^n_H, \delta_{V_H^n} )
$$ 
by
 $
f^n_V(C) \colon =[{\mathbf{P}}(f)]\circ \phi_G(C)
$
and 
$
f_E^n(A\to B) =(f_V(A)\to f_V(B))\in E^n_H
$.  Thus, we obtain a functor  
$\mathfrak N^{ w} $  from the category $\mathcal{DH}^{e}$ of edge-weighted directed hypergraphs 
to the category  $\mathcal D^{\delta}$  of weighted  digraphs.
\end{proposition}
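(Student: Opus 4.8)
The plan is to verify that the stated assignment $\mathfrak N^{w}$ really lands in the category $\mathcal D^{\delta}$ of weighted digraphs, i.e.\ that the underlying digraph morphism produced by the (already functorial) construction $\mathfrak N$ is compatible with the weight functions $\delta_{V_G^n}$ and $\delta_{V_H^n}$ defined by $\delta_{V^n_G}(A)=|A|$. Since $\mathfrak N$ is known to be a functor from $\mathcal{DH}$ to $\mathcal D$, the pair $(f^n_V,f^n_E)$ is automatically a digraph morphism whenever $f$ is a hypergraph morphism; the only new content is the preservation of vertex weights on the nerve-type digraph, which is exactly where the \emph{edge-weighted} hypothesis enters.

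First I would recall the definitions: a vertex of $\mathfrak N^{w}(G)$ is a subset $C\in \mathbf P_{01}(G)$, i.e.\ either an origin $A$ or an end $B$ of some arrow $A\to B\in E_G$, and its weight is $\delta_{V^n_G}(C)=|C|=\sum_{x\in C}\delta_{V_G}(x)$. Given the edge-weighted morphism $f^e\colon G^{\delta}\to H^{\delta}$, the induced vertex map sends $C$ to $f_V(C)$ (through $\mathbb P(f_V)$), and I must show $\delta_{V^n_H}(f^n_V(C))=\delta_{V^n_G}(C)$ for every such $C$. Each $C$ is either an origin or an end of some arrow; say $C=A$ with $A\to B\in E_G$ and $f_E(A\to B)=A'\to B'$. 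By commutativity of diagram (3.1), $f_V(A)=A'$, and the edge-weighted condition of Definition 3.2(ii) gives precisely $|A|=|A'|$, hence $|C|=|f_V(C)|$. The case $C=B$ is symmetric, using $|B|=|B'|$.

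Next I would record that $f^n_E(A\to B)=(f_V(A)\to f_V(B))$ lies in $E^n_H$ because $f_E(A\to B)\in E_H$ and the nerve digraph simply renames arrows as their origin–end pairs; this is already guaranteed by functoriality of $\mathfrak N$. The two displayed formulas therefore define a morphism $[\mathfrak N^w(f)]$ of weighted digraphs. Finally, functoriality of $\mathfrak N^{w}$ — that identities go to identities and composites to composites — follows immediately from functoriality of the underlying $\mathfrak N$, since the weight-compatibility I have just checked is a property (not extra data) and the vertex/edge maps are unchanged.

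The only genuine step is the weight-preservation on vertices, and the single subtlety I would watch is that a subset $C$ may occur as the origin of one arrow and the end of another; but in either rôle the matching edge-weighted identity ($|A|=|A'|$ or $|B|=|B'|$) yields the same conclusion $|C|=|f_V(C)|$, so the definition is unambiguous and no obstacle arises. Thus I expect this proposition to be essentially a bookkeeping verification, with the edge-weighted hypothesis doing exactly the work needed to promote the digraph functor $\mathfrak N$ to the weighted digraph functor $\mathfrak N^{w}$.
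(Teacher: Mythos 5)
Your proof is correct and is exactly the routine verification the paper intends: Proposition \ref{p3.3} is stated there without any proof, being treated as immediate from the definitions, and your argument supplies precisely the implicit details --- commutativity of diagram (\ref{3.1}) identifies the origin--end pair of $f_E(A\to B)$ with $(f_V(A), f_V(B))$, the edge-weighted condition $|A|=|A'|$, $|B|=|B'|$ of Definition \ref{d3.2}(ii) gives weight preservation on all vertices $C\in\mathbf{P}_{01}(G)$ of the nerve digraph, and functoriality of $\mathfrak N^{w}$ is inherited from the underlying functor $\mathfrak N$ since weight-compatibility is a property rather than extra data. Your closing observation, that a set $C$ arising both as an origin and as an end creates no ambiguity because the weight $|C|$ is intrinsic to $C$ and either edge-weighted identity yields $|C|=|f_V(C)|$, is a worthwhile point the paper passes over silently.
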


\noindent
The composition  $\mathfrak D^{w}\circ \mathfrak N^{w}$  
is  a functor $\mathcal{DH}^{e} \to \mathcal P^{\delta}$,  and  we define the
 \emph{edge-weighted path homology groups} of the  weighted 
directed hypergraph 
 $G^{\delta}$ by
$$
H_*^{\mathbf e}(G)\colon = H_*^{\mathbf w}(\mathfrak D^w\circ \mathfrak N^{w}(G) ).
$$

Now we use  the notion of a \emph{line digraph} 
$I_n=(V_{I_n}, E_{I_n}) \ (n\geq 0)$  from \cite[Sec. 3.1]{MiHomotopy}. 
Note that we have two digraphs $I_1$, namely $0\to 1$ and $1\to 0$.   
For a digraph $G$  the \emph{box product} 
 $G\Box I_n$ is a digraph  with  
$V_{G\Box I_n}=V_{G}\times V_{I_n}$ and with the set of arrows
 $E_{G\Box I_n}$ such that there is an arrow 
 $((x,i)\rightarrow (y,j))\in E_{G\Box H}$  if and only 
if either $x=y$ and  $i\rightarrow j$  or  $x\rightarrow y$  and 
$i=j$, see \cite{MiHomotopy}. If $G^{\delta}$ is a weighted digraph,  
we  equip the digraph $G\Box I_n$ with the structure of a weighted 
digraph by setting $\delta_{V_{G\Box I_n}}(v, i)=\delta_{V_G}(v)$
for $v\in V_G$, $i\in V_{I_n}$. We denote this weighted digraph by 
$G^{\delta}\Box I_n$. 

For a  directed hypergraph $G$,   the  \emph{box product} 
  $G\Box I_n=(V_{G\Box I_n}, E_{G\Box I_n})$ 
  is a directed hypergraph with 
$V_{G\Box I_n}=V_{G}\times V_{I_n}$ and the set of arrows $E_{G\Box I_n}$ being a 
 union of   
$
\{A\times i\to  B\times i\}
$,
 $(A\to B)\in E_G$,   $i\in V_{I_n}$ and 
$ \{A\times i\to  A\times j\}$, 
$ (i\to j)\in E_{I_n}$,  $A\in 
{\mathbf{P}}_{01}(G)$. If $G^{\delta}$ is a weighted directed 
hypergraph,  we equip the directed hypergraph $G\Box I_n$ 
with  the structure of a weighted directed hypergraph 
 by setting $\delta_{V_{G\Box I_n}}(v\times  i)=\delta_{V_G}(v)$
for $v\in V_G$, $i\in V_{I_n}$. We denote this 
weighted directed hypergraph by $G^{\delta}\Box I_n$.  
Recall, see e.g.  \cite{Miarx},  that two  morphisms  
$f_0, f_1\colon G\rightarrow H$ of directed hypergraphs  
	are called \emph{one-step homotopic} 
	$f_0\simeq_1 f_1$,  if there exists 
	a morphism  
	$F\colon G\Box I_1\rightarrow H$,  such that 
\begin{equation*}
F|_{G\Box \{0\}}=f_0\colon G\Box \{0\}\rightarrow H,\ \ F|_{G\Box
\{1\}}=f_1\colon G\Box \{1\}\rightarrow H.
\end{equation*} 

Two  morphisms  $ f,g\colon G \to  H$  of directed 
hypergraphs are called  \emph{homotopic}  
$ f\simeq  g$,  if there exists a sequence 
of morphisms 
$
f_i\colon G\to H$ for   $i=0,\dots,  n$  such that 
 $ f=f_0\simeq_1  f_1\simeq_1 \dots \simeq_1  f_n =g$.

\begin{proposition}\label{p3.4} Let   
	$f_0^{\chi}, f_1^{\chi}\colon G^{\delta}\rightarrow H^{\delta}$   
	be weighted morphisms of weighted directed hypergraphs  where
	 $\chi$ is $v$, $e$, or $s$.  
If $f_0$ and $f_1$ are one-step homotopic as morphisms of directed 
hypergraphs by means of  a homotopy $F$,  then   $F$ can be considered as a weighted morphism 
$F^{\chi}\colon G^{\delta}\times I_1\to H^{\delta}$. 
\end{proposition}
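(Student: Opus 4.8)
The plan is to regard the directed-hypergraph morphism $F\colon G\Box I_1\to H$ as fixed and verify directly, case by case in $\chi\in\{v,e,s\}$, that it is compatible with the box-product weighting $\delta_{V_{G\Box I_1}}(v\times i)=\delta_{V_G}(v)$. Two preliminary facts do most of the work. First, $F$ is determined on vertices by its two faces: the conditions $F|_{G\Box\{0\}}=f_0$ and $F|_{G\Box\{1\}}=f_1$ give $F_V(v\times 0)=(f_0)_V(v)$ and $F_V(v\times 1)=(f_1)_V(v)$ for all $v\in V_G$, and hence $F_V(A\times\{i\})=(f_i)_V(A)$ for every $A\subseteq V_G$. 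Combined with the commutativity of diagram \eqref{3.1}, which forces $F_E(C\to D)=(F_V(C)\to F_V(D))$, this pins down the image of every arrow in terms of $f_0$ and $f_1$. Second, the box weighting leaves level-sets unchanged: $|A\times\{i\}|=\sum_{a\in A}\delta_{V_{G\Box I_1}}(a\times i)=\sum_{a\in A}\delta_{V_G}(a)=|A|$.

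For $\chi=v$ the verification is immediate: for a vertex $v\times i$ we have $\delta_{V_H}(F_V(v\times i))=\delta_{V_H}((f_i)_V(v))=\delta_{V_G}(v)=\delta_{V_{G\Box I_1}}(v\times i)$, where the middle equality is the vertex-weightedness of $f_i$ and the last is the definition of the box weighting.

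For $\chi=e$ I would split the arrows of $G\Box I_1$ into the two families from the definition of the box product. For a horizontal arrow $A\times i\to B\times i$ coming from $(A\to B)\in E_G$, its origin and end are carried by $F$ to $(f_i)_V(A)$ and $(f_i)_V(B)$; since $A$ and $B$ are respectively the origin and end of an actual arrow of $G$, edge-weightedness of $f_i$ gives $|A\times\{i\}|=|A|=|(f_i)_V(A)|$ and likewise for $B$, so both origin- and end-weights are preserved. For a vertical arrow $A\times i\to A\times j$ with $(i\to j)\in E_{I_1}$ and $A\in\mathbf{P}_{01}(G)$, the two endpoints live in different levels, so $F$ sends the origin to $(f_i)_V(A)$ and the end to $(f_j)_V(A)$; because $A\in\mathbf{P}_{01}(G)$ is an origin or an end of some arrow of $G$, edge-weightedness of $f_i$ and of $f_j$ still yields $|A|=|(f_i)_V(A)|$ and $|A|=|(f_j)_V(A)|$. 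Thus $F^e$ is edge-weighted, and the case $\chi=s$ is the conjunction of the $v$- and $e$-verifications.

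I do not anticipate a real obstacle, only one point deserving care: the vertical arrows of the box product have their two endpoints in opposite copies of $G$, so they are governed by $f_0$ and $f_1$ separately rather than by a single morphism. What saves the argument is that the edge-weighted condition decouples into an origin-condition and an end-condition, each living entirely in one level, so the relevant $f_i$ can be applied to each endpoint independently; and the applicability of edge-weightedness on these arrows rests precisely on the fact that every $A\in\mathbf{P}_{01}(G)$ is an origin or end of an arrow of $G$.
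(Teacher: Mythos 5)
Your proposal is correct and is essentially the paper's argument: the paper disposes of the proposition with the one-line remark that it ``follows from the definition of $G^{\delta}\Box I_1$,'' and your verification simply spells out what that definition check amounts to, case by case in $\chi\in\{v,e,s\}$. You have also correctly isolated and resolved the one genuinely delicate point left implicit in the paper, namely that for the vertical arrows $A\times i\to A\times j$ the two endpoints are governed by $f_i$ and $f_j$ separately, and that edge-weightedness still applies to each because every $A\in\mathbf{P}_{01}(G)$ occurs as the origin or end of some arrow of $G$.
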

\begin{proof}  It follows from the definition  of  $G^{\delta}\Box I_1$.   
\end{proof}

\noindent If $\chi$ means one of the values  $v$, $e$, or $s$  we obtain  
a \emph{homotopy category of $\chi$-weighted directed hypergraphs}  
${h\mathcal {DH}^{\chi}}$ whose objects are  weighted directed hypergraphs and morphisms 
are  classes of homotopic $\chi$-weighted  morphisms.   

\begin{lemma}\label{l3.5}  Let  $I_1=(0\to 1)$  and $G^{\delta}$  be a weighted digraph. 
	Then there is an equality   
$
{[\mathfrak D^w(G^{\delta})]^{\uparrow}}^{\delta}=
\mathfrak D^w(G^{\delta}\Box I_1)
$
of weighted path complexes.
\end{lemma}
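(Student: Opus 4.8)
The plan is to establish the asserted identity of weighted path complexes by comparing the three pieces of data that define such an object: the vertex set, the weight function, and the set of elementary paths. Both sides are built from $\mathfrak{D}^w(G^{\delta})=(V_G,P_G,\delta_{V_G})$, so I first record that the underlying vertex sets agree strictly: the vertex set of ${[\mathfrak{D}^w(G^{\delta})]^{\uparrow}}^{\delta}$ is $V_G\times J=V_G\times\{0,1\}$, while the vertex set of $\mathfrak{D}^w(G^{\delta}\Box I_1)$ is $V_{G\Box I_1}=V_G\times V_{I_1}=V_G\times\{0,1\}$ once we identify $V_{I_1}$ with $J$. Under this identification the weight functions coincide too, since by the definition of the weight on $\Pi^{\uparrow}$ and on the box product of weighted digraphs both send $(v,i)$ to $\delta_{V_G}(v)$. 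It then remains to prove the equality of the path sets $P_G^{\uparrow}=P_G\cup P_G'\cup P_G^{\#}$ and $P_{G\Box I_1}$.

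The main step is to describe the elementary paths of $\mathfrak{D}^w(G^{\delta}\Box I_1)$ explicitly. Here I use the defining feature of the box product: every arrow of $G\Box I_1$ changes exactly one coordinate, arising either from an arrow $x\to y$ of $G$ with the $I_1$-coordinate fixed, or from the unique arrow $0\to 1$ of $I_1$ with the $G$-coordinate fixed. Consequently, along any path $w_0w_1\dots w_m$ of $\mathfrak{D}^w(G\Box I_1)$ the $I_1$-coordinate is nondecreasing, and since $I_1$ has the single arrow $0\to 1$ and no arrow out of $1$, it can jump from $0$ to $1$ at most once, with such a jump keeping the $G$-coordinate fixed. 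This forces exactly three possible shapes of path: constant $I_1$-coordinate equal to $0$, constant $I_1$-coordinate equal to $1$, or a single jump.

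I then match each shape with one summand of $P_G^{\uparrow}$. Paths with constant coordinate $0$ are precisely the images of $P_G$ under $v\mapsto(v,0)$; paths with constant coordinate $1$ are the images under $v\mapsto(v,1)$, that is $P_G'$. A path that jumps once has the form $(a_0,0)\dots(a_k,0)(a_k,1)(a_{k+1},1)\dots(a_n,1)$, where the $\epsilon=0$ segment forces $a_0\to\cdots\to a_k$ and the $\epsilon=1$ segment forces $a_k\to\cdots\to a_n$ to be arrows of $G$; hence $q=(a_0\dots a_n)\in P_G$, and under the identifications $(a,0)\leftrightarrow a$, $(a,1)\leftrightarrow a'$ this path is exactly $q_k^{\#}$. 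Conversely each $q_k^{\#}\in P_G^{\#}$ is realized this way, as $q\in P_G$ supplies all required $G$-arrows and the middle step is the $I_1$-arrow $0\to1$ at the fixed vertex $a_k$. Therefore $P_{G\Box I_1}=P_G\cup P_G'\cup P_G^{\#}=P_G^{\uparrow}$, and the equality of weighted path complexes follows.

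The only genuine care is in the case analysis above: confirming that the three path-shapes are exhaustive (the monotonicity-and-single-jump argument for the $I_1$-coordinate) and that the jump index $k$ ranges over all of $\{0,\dots,n\}$, including the extremes $k=0$ and $k=n$, so that the correspondence with $P_G^{\#}$ is a genuine bijection rather than a mere containment. I would also note in passing that $G\Box I_1$ is loop-free, since no arrow changes both coordinates and neither $G$ nor $I_1$ has a loop, so that the functor $\mathfrak{D}^w$ indeed applies; the rest is routine bookkeeping.
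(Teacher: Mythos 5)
Your proof is correct and matches the paper's intent exactly: the paper's own proof is a one-line appeal to the definitions of $\mathfrak D^w$, the box product, and $\Pi^{\uparrow}$, and your argument is precisely the detailed unfolding of those definitions (vertex sets, weights, and the three-shape case analysis of paths via the monotone $I_1$-coordinate). In particular, your checks that the jump index $k$ runs over all of $\{0,\dots,n\}$ and that $G\Box I_1$ is loop-free supply exactly the bookkeeping the paper leaves implicit.
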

\begin{proof}  It follows from the definitions of the functor $\mathfrak D^w$, the box product, 
	and the path complex $\Pi^{\uparrow}$.   
\end{proof}

\begin{lemma}\label{l3.6}  Let $G^{\delta}$ be a weighted directed hypergraph and 
$I_1=(0\to 1)$. Then there is an equality   
$
{[\mathfrak D^w\mathfrak N^{w}(G)]^{\uparrow}}^{\delta}=\mathfrak D^w
\mathfrak{N^w}(G\Box I_1)
$
of the weighted path complexes.
\end{lemma}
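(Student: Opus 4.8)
The cleanest route is to reduce the statement to the digraph case already settled in Lemma~\ref{l3.5}. Since $\mathfrak N^{w}(G)$ is a weighted digraph by Proposition~\ref{p3.3}, applying Lemma~\ref{l3.5} to it gives ${[\mathfrak D^w\mathfrak N^{w}(G)]^{\uparrow}}^{\delta}=\mathfrak D^w(\mathfrak N^{w}(G)\Box I_1)$, where now $\mathfrak N^{w}(G)\Box I_1$ is the box product of weighted \emph{digraphs}. Thus it suffices to establish the commutation $\mathfrak N^{w}(G\Box I_1)=\mathfrak N^{w}(G)\Box I_1$ of weighted digraphs, since applying the functor $\mathfrak D^w$ to both sides then finishes the proof. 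So the plan is to verify this commutation on vertices, weights, and arrows in turn.

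For the vertices, I would analyze $\mathbf P_{01}(G\Box I_1)$ using the two families of arrows of the hypergraph $G\Box I_1$: the horizontal arrows $A\times i\to B\times i$ with $(A\to B)\in E_G$, and the vertical arrows $A\times 0\to A\times 1$ with $A\in\mathbf P_{01}(G)$. Every origin and end of such an arrow has the form $C\times\{i\}$ with $C\in\mathbf P_{01}(G)$ and $i\in\{0,1\}$, and conversely each $C\times\{0\}$ and $C\times\{1\}$ with $C\in\mathbf P_{01}(G)$ occurs, as the origin and the end of the vertical arrow on $C$. Hence $V^n_{G\Box I_1}=V^n_G\times\{0,1\}$, which is exactly the vertex set of $\mathfrak N^{w}(G)\Box I_1$ under the identification $C\times\{i\}\leftrightarrow (C,i)$. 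The weights agree as well, since $\delta_{V^n_{G\Box I_1}}(C\times\{i\})=|C\times\{i\}|=\sum_{c\in C}\delta_{V_G}(c)=|C|=\delta_{V^n_G}(C)$, and $|C|$ is precisely the weight of $(C,i)$ in $\mathfrak N^{w}(G)\Box I_1$.

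For the arrows, the horizontal arrows $A\times i\to B\times i$ of $G\Box I_1$ correspond exactly to the arrows $(A,i)\to(B,i)$ of $\mathfrak N^{w}(G)\Box I_1$ induced by $(A\to B)\in E^n_G$, while the vertical arrows $A\times 0\to A\times 1$ correspond exactly to the arrows $(A,0)\to(A,1)$ induced by the single arrow $0\to 1$ of $I_1$. The one point that needs care, and which I regard as the crux, is that the vertical arrows of the hypergraph box product are indexed precisely by $A\in\mathbf P_{01}(G)$, which coincides with the vertex set $V^n_G$ of $\mathfrak N^{w}(G)$; this is exactly what makes them match the vertical arrows of the digraph box product $\mathfrak N^{w}(G)\Box I_1$. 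Granting this, the two weighted digraphs coincide, and applying $\mathfrak D^w$ together with Lemma~\ref{l3.5} yields the asserted equality of weighted path complexes. The remaining checks, namely the non-emptiness and disjointness of origins and ends of the arrows of $G\Box I_1$ so that $\mathfrak N^{w}$ is applicable, are immediate from the box-product construction.
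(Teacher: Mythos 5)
Your proof is correct and follows exactly the paper's route: apply Lemma~\ref{l3.5} to the weighted digraph $\mathfrak N^{w}(G)$ and then use the identity $\mathfrak N^{w}(G)\Box I_1=\mathfrak N^{w}(G\Box I_1)$, which the paper simply asserts ``by definition of the box product and the functor $\mathfrak N^w$.'' The only difference is that you spell out the vertex, weight, and arrow checks for that identity, which the paper leaves implicit.
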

\begin{proof}  By Lemma   \ref{l3.5},  $
{[\mathfrak D^w(\mathfrak N^w(G))]^{\uparrow}}^{\delta}=
\mathfrak D^w\left[\mathfrak N^w(G)\Box I_1\right]$. The weighted digraphs 
$\mathfrak N^w(G)\Box I_1$ and $\mathfrak N^w(G\Box I_1)$ are equal by definition of the 
the box product and the functor $\mathfrak N^w$. Hence 
$\mathfrak D^w\left[\mathfrak N^w(G)\Box I_1\right]=\mathfrak D^w\mathfrak N^w(G\Box I_1)
$ and the result follows.
\end{proof}

\begin{theorem}\label{t3.7} Let $f^e, g^e\colon G^{\delta}\to 
H^{\delta}$ be  
edge-weigh\-ted homotopic morphisms 
	of weighted  directed hypergraphs and assume that all elements of the set 
$\mathbb K=\{k\in R| k=|A|,   A\in \mathbf P_{01}(G)\}$
 are invertible in $R$. Then the induced homomorphisms 
$$
f_*, g_*\colon H^{\mathbf e}_*(G, R)\to H^{\mathbf e}_*(H, R)
$$ 
coincide.  
\end{theorem}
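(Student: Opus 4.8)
The plan is to reduce the statement to Theorem~\ref{t2.6} by pushing everything through the composite functor $\mathfrak D^w\circ\mathfrak N^w\colon \mathcal{DH}^e\to\mathcal P^\delta$, which by definition computes $H_*^{\mathbf e}$. First I would recall that $H_*^{\mathbf e}(G)=H_*^{\mathbf w}(\mathfrak D^w\mathfrak N^w(G))$ and that the induced map $f_*$ on edge-weighted homology is precisely the map induced on weighted path homology by the weighted path complex morphism $\mathfrak D^w\mathfrak N^w(f^e)$. Since a homotopy $f^e\simeq g^e$ is by definition a finite chain of one-step homotopies, and Theorem~\ref{t2.6} already accepts an arbitrary weighted homotopy, it suffices to show that each one-step homotopy of directed hypergraphs is carried by $\mathfrak D^w\mathfrak N^w$ to a weighted one-step homotopy of weighted path complexes.

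So I would fix a single one-step homotopy and let $F\colon G\Box I_1\to H$, with $I_1=(0\to 1)$, be the witnessing morphism, so that $F|_{G\Box\{0\}}=f_0$ and $F|_{G\Box\{1\}}=f_1$. By Proposition~\ref{p3.4}, $F$ may be regarded as an edge-weighted morphism $F^e\colon G^\delta\Box I_1\to H^\delta$. Applying $\mathfrak D^w\mathfrak N^w$ and invoking Lemma~\ref{l3.6} to identify its source, one obtains a morphism of weighted path complexes
$$
\mathfrak D^w\mathfrak N^w(F)\colon {[\mathfrak D^w\mathfrak N^w(G)]^{\uparrow}}^{\delta}\longrightarrow \mathfrak D^w\mathfrak N^w(H),
$$
whose precompositions with $i_\bullet$ and $j_\bullet$ are $\mathfrak D^w\mathfrak N^w(f_0)$ and $\mathfrak D^w\mathfrak N^w(f_1)$. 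This is exactly the data of a one-step homotopy of path complexes in the sense of Definition~\ref{d2.2}, and it is a \emph{weighted} morphism because $\mathfrak N^w$ sends edge-weighted morphisms to weighted morphisms of weighted digraphs (Proposition~\ref{p3.3}) while $\mathfrak D^w$ preserves the weight function; hence $\mathfrak D^w\mathfrak N^w(f_0^e)$ and $\mathfrak D^w\mathfrak N^w(f_1^e)$ are weighted one-step homotopic.

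It then remains to verify the invertibility hypothesis of Theorem~\ref{t2.6} for the source complex $\mathfrak D^w\mathfrak N^w(G)$. Its vertex set is $V_G^n=\mathbf P_{01}(G)$, and by the definition of $\mathfrak N^w$ the weight of a vertex $A$ equals $|A|$; since every element of $\mathbb K=\{|A|\mid A\in\mathbf P_{01}(G)\}$ is invertible in $R$ by assumption, every vertex weight of $\mathfrak D^w\mathfrak N^w(G)$ is invertible, which is exactly what Theorem~\ref{t2.6} requires (note that only the source weights enter). Concatenating the one-step homotopies produces a weighted homotopy $\mathfrak D^w\mathfrak N^w(f^e)\simeq\mathfrak D^w\mathfrak N^w(g^e)$, and Theorem~\ref{t2.6} delivers equal homomorphisms on $H_*^{\mathbf w}$, i.e.\ $f_*=g_*$ on $H_*^{\mathbf e}$. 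I expect the only genuinely delicate point to be the bookkeeping in the middle paragraph: confirming that the identification of Lemma~\ref{l3.6} is compatible with the inclusions $i_\bullet,j_\bullet$, so that the two end-restrictions of $\mathfrak D^w\mathfrak N^w(F)$ really are $\mathfrak D^w\mathfrak N^w(f_0)$ and $\mathfrak D^w\mathfrak N^w(f_1)$; the invertibility check and the final appeal to Theorem~\ref{t2.6} are then immediate.
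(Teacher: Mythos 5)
Your proposal is correct and follows exactly the paper's own route: the paper's proof of Theorem~\ref{t3.7} is the one-line reduction ``It follows from Lemma~\ref{l3.6} and Theorem~\ref{t2.6},'' and your argument is precisely that reduction spelled out in full (one-step homotopies via Proposition~\ref{p3.4}, the identification of the cylinder via Lemma~\ref{l3.6}, and the invertibility check that the vertex weights of $\mathfrak N^w(G)$ are exactly the elements of $\mathbb K$). The details you supply, including the observation that only the source weights enter Theorem~\ref{t2.6}, are all accurate.
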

\begin{proof} It follows from Lemma \ref{l3.6} and Theorem \ref{t2.6}. 
\end{proof}

Let us consider a subcategory $h\mathcal{D}\mathcal{H}^1$ of the category 
$h\mathcal{D}\mathcal{H}^{e}$  in which $\delta_V\equiv 1$ for every 
weighted directed hypergraph. 

\begin{theorem}\label{t3.8} If 
	the ring $R$ is an algebra over rationals, then the weighted homology groups 
$ H^{\mathbf e}_*(-, R)$  are homotopy invariant on the category 
$h\mathcal{D}\mathcal{H}^1$. 
\end{theorem}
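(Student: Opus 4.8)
The plan is to reduce this statement directly to Theorem~\ref{t3.7}. Homotopy invariance of $H^{\mathbf e}_*(-,R)$ on $h\mathcal{D}\mathcal{H}^1$ means precisely that any two homotopic edge-weighted morphisms induce equal maps on $H^{\mathbf e}_*(-,R)$, and Theorem~\ref{t3.7} already delivers exactly this conclusion, provided that every element of the set $\mathbb K=\{k\in R\mid k=|A|,\ A\in\mathbf P_{01}(G)\}$ is invertible in $R$. Thus the whole content of the proof is to verify that, under the two hypotheses $\delta_V\equiv 1$ and $R$ a $\mathbb Q$-algebra, this invertibility condition is automatically satisfied for every weighted directed hypergraph in the category.

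First I would unwind the weight of a set in the case $\delta_V\equiv 1$. For any $A\in\mathbf P_{01}(G)$ we have, by the definition of the weight, $|A|=\sum_{x\in A}\delta_V(x)=\sum_{x\in A}1_R=(\operatorname{card} A)\cdot 1_R$, the image of the positive integer $\operatorname{card} A$ under the canonical map $\mathbb Z\to R$. Since every $A\in\mathbf P_{01}(G)$ is by construction a non-empty subset of $V_G$, we have $\operatorname{card} A\geq 1$, so $|A|$ is the image of a strictly positive integer.

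Next I would invoke the $\mathbb Q$-algebra hypothesis. Let $\iota\colon\mathbb Q\to R$ be the structural unital ring homomorphism. For each positive integer $n$ we have $n\cdot 1_R=\iota(n)$, and since $1/n\in\mathbb Q$ satisfies $\iota(1/n)\cdot\iota(n)=\iota(1)=1_R$, the element $n\cdot 1_R$ is invertible in $R$ with inverse $\iota(1/n)$. Taking $n=\operatorname{card} A$ shows that every weight $|A|$ is a unit, so $\mathbb K$ consists entirely of invertible elements; this holds uniformly across all objects of $h\mathcal{D}\mathcal{H}^1$ precisely because the constant weight $\delta_V\equiv 1$ is fixed throughout the category.

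Finally, with the invertibility hypothesis in hand, Theorem~\ref{t3.7} applies to every pair of edge-weighted homotopic morphisms and yields equal induced homomorphisms on $H^{\mathbf e}_*(-,R)$, which is exactly the asserted homotopy invariance. I expect no serious obstacle in this argument; the single point deserving attention is the use of the non-emptiness of the sets $A\in\mathbf P_{01}(G)$, which guarantees $\operatorname{card} A\geq 1$ and hence ensures that we are inverting the image of a genuine positive rational rather than $0$.
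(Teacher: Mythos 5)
Your proposal is correct and matches the paper's intent: the paper's own proof is simply ``similar to the proof of Theorem~\ref{t3.7}'' (i.e., Lemma~\ref{l3.6} plus Theorem~\ref{t2.6}), and the only substantive point is exactly the one you isolate, namely that with $\delta_V\equiv 1$ every weight $|A|=(\operatorname{card}A)\cdot 1_R$ is the image of a positive integer and hence invertible when $R$ is a $\mathbb{Q}$-algebra. Invoking Theorem~\ref{t3.7} as a black box after verifying its invertibility hypothesis is just a cleaner packaging of the same argument, so there is nothing to add.
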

\begin{proof} Similar to the proof of  Theorem \ref{t3.7}. 
\end{proof}
\smallskip

Now,  we describe  several weighted  path homology theories on the  category 
$\mathcal{DH}^v$ which are similar to the path homology theories on  
$\mathcal{DH}$ constructed in  \cite[Sec. 3]{Miarx}. 
We present here only functorial and homotopy invariant theories. 
The other path homology theories from \cite{Miarx} can be transferred to 
the case of weighted hypergraphs similarly. 
Let us consider  the functors $\mathfrak C$, 
$\mathfrak B$, and the functor given by the composition  
$\mathfrak H^{ q}\circ \mathfrak E$  
from the category $\mathcal{DH}$  to the category  $\mathcal P$ of 
path complexes  \cite{Miarx}. 
We recall   the definition   of these functors for  objects, 
then it will be clear  
for morphisms.  

 Let $G= (V, E)$ be  a  directed hypergraph. 
Define a path complex    $\mathfrak C(G)=(V^c, P_G^c)$  by 
$V^c=V$ and  a path $(i_0\dots i_n)\in P_{V}$ lies in $P_G^c$  iff 
for any pair of consequent vertices $(i_{k}, i_{k+1})$  
either $i_{k}=i_{k+1}$ or
there is an  edge $\mathbf e=(A\to B)\in E$  such that 
$i_k$ is the initial vertex and  $i_{k+1}$ is the terminal vertex of  $\mathbf e$. 

Recall the definition of the \emph{concatenation} $p\vee q$ of two paths 
$p=(i_0,\dots, i_n)$ and $q=(j_0, \dots,j_m)$ on a set $V$. 
  The concatenation is well defined only for $i_n = j_0$ and in such a case,  it  is a path on $V$ given by 
$p\vee q= (i_0 \dots  i_nj_1 \dots j_m)$.  Define a path complex  $\mathfrak B(G)=(V^b_G, P_G^b)$   where $V^b_G=V$ and  
a path $q=(i_0\dots i_n)\in P_{V}$ lies in $P_G^b$  iff  there is a sequence of 
 edges $(A_0\to B_0), \dots, (A_r\to B_r)$  in $E$ such that   
 $B_i\cap A_{i+1}\ne \emptyset$ for $0\leq i\leq r-1$ and  the path $q$ can be presented in the form 
\begin{equation*}
\left(p_0\vee v_0w_0 \vee p_1  \vee v_1w_1\vee p_2\vee \dots  \vee p_{r}\vee  v_r w_r\vee p_{r+1}\right)
\end{equation*}
where $p_0\in P_{A_0}$,  $p_{r+1}\in P_{B_r}$,  $v_i\in A_i$, $w_i\in B_i$, $p_i\in P_{B_{i-1}}\cap  P_{A_{i}}$  
for  $1\leq i\leq r$  and all concatenations  are well defined.  

Recall that a   \emph{hypergraph} $G=(V,E)$  consists of  a non-empty    
set  $V$  of vertices and  a  set  of edges $E=\{\mathbf e_1,\dots, \mathbf e_n\}$ 
which are    distinct  subsets of $V$ such that 
	$
	\bigcup_{i=1}^n \mathbf  e_i=V
	$ and every $\mathbf e_i$ contains strictly more than one element.  
For a directed hypergraph $G$,  we define a hypergraph $\mathfrak E(G)=(V^e, E^e)$ 
where  $V^e=V$ and 
\begin{equation*}
E^{e}=\{C\in {\mathbf{P}}(V)\setminus \emptyset \, | \, C=A\cup B, (A\to B)\in E\}.
\end{equation*}
For a hypergraph   $G= (V, E)$, 
define a path complex  $\mathfrak H^{2}(G)=(V^{2}, P_G^{2})$ of \emph{density two} 
 where $V^{2}=V$ and   a path $(i_0\dots i_n)\in P_{V}$ lies $\in P_G^{2}$  
 iff  every two  consequent vertices of this path
lie in an  edge $\mathbf e\in E$, see \cite{Hyper}. 
Thus for every directed hypergraph $G$ the composition   
$\mathfrak H^{2}\circ \mathfrak E$ defines a path complex 
$\mathfrak H^{2}\circ \mathfrak E(G)$. 

For a weighted directed hypergraph   $G^{\delta}= (V, E, \delta_V)$, 
the path complexes  $\mathfrak C(G)$, 
$\mathfrak B(G)$, and $\mathfrak H^{2}\circ \mathfrak E(G)$  
have the natural structure of weighted path complexes given by 
the weight function $\delta_V\colon V\to R$. We denote  weighted 
path complexes by   $\mathfrak C^{v}(G^{\delta})$, 
$\mathfrak B^{v}(G^{\delta})$, and $[\mathfrak H^{2}\mathfrak E(G^{\delta})]^v$, 
respectively.

\begin{proposition}\label{p3.9} If $f^{v}\colon G^{\delta}\to H^{\delta}$ 
is a vertex-weighted morphism of weighted directed hypergraphs, then the morphisms
$$
\begin{matrix}
\mathfrak C(f)\colon  \mathfrak C(G)\to  \mathfrak C(H),\\
\mathfrak B(f)\colon  \mathfrak B(G)\to  \mathfrak B(H),\\
\mathfrak H^{2} \mathfrak E(f)\colon  \mathfrak H^{2} 
\mathfrak E (G)\to  \mathfrak H^{2} \mathfrak E(H)
\end{matrix}
$$
 are vertex weighted  for the weighted function $\delta_V$. 
 Thus,  we  have the functors $\mathfrak C^{v}$, 
$\mathfrak B^{v}$, and $[\mathfrak H^{2} \mathfrak E]^v$ from  
$\mathcal{DH}^{v}$ to $\mathcal P^{\delta}$. 
\end{proposition}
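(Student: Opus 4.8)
The plan is to reduce everything to the single observation that each of the three functors $\mathfrak{C}$, $\mathfrak{B}$, and $\mathfrak{H}^{2}\circ\mathfrak{E}$ leaves the vertex set and the weight function untouched, so that the weighted-morphism condition for the resulting maps of path complexes is literally the vertex-weighted condition already assumed on $f$. First I would recall Definition \ref{d2.1}(ii): a morphism $h\colon (V,P)\to (W,S)$ of weighted path complexes is weighted precisely when $\delta_{W}(h_{V}(v))=\delta_{V}(v)$ for every $v\in V$. The whole argument consists in matching this condition to Definition \ref{d3.2}(i).

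Next I would record that on objects each functor acts as the identity on vertices. Indeed, $\mathfrak{C}(G)$ has vertex set $V^{c}=V$, $\mathfrak{B}(G)$ has $V^{b}_{G}=V$, while $\mathfrak{E}(G)$ has $V^{e}=V$ and $\mathfrak{H}^{2}$ likewise preserves vertices, so $\mathfrak{H}^{2}\mathfrak{E}(G)$ has vertex set $V$ as well. Moreover the weighted structures $\mathfrak{C}^{v}(G^{\delta})$, $\mathfrak{B}^{v}(G^{\delta})$, and $[\mathfrak{H}^{2}\mathfrak{E}(G^{\delta})]^{v}$ are by construction equipped with the very same weight function $\delta_{V}\colon V\to R$. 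Consequently, for a morphism $f$ of directed hypergraphs, the vertex component of each of $\mathfrak{C}(f)$, $\mathfrak{B}(f)$, and $\mathfrak{H}^{2}\mathfrak{E}(f)$ coincides with $f_{V}\colon V_{G}\to V_{H}$, since the action of these functors on morphisms, being functorial, does not alter the underlying vertex map.

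Then the verification is immediate. Since $f^{v}$ is vertex-weighted, Definition \ref{d3.2}(i) gives $\delta_{V_{H}}(f_{V}(v))=\delta_{V_{G}}(v)$ for all $v\in V_{G}$; because the weight functions on the image path complexes are exactly $\delta_{V_{H}}$ and $\delta_{V_{G}}$, this is precisely the condition of Definition \ref{d2.1}(ii). Hence each of the three path-complex morphisms is a weighted morphism. Finally, as $\mathfrak{C}$, $\mathfrak{B}$, and $\mathfrak{H}^{2}\mathfrak{E}$ are already functors into $\mathcal{P}$ and the weight data is preserved on both objects and morphisms, they lift to functors $\mathfrak{C}^{v}$, $\mathfrak{B}^{v}$, and $[\mathfrak{H}^{2}\mathfrak{E}]^{v}\colon \mathcal{DH}^{v}\to\mathcal{P}^{\delta}$, which is the second assertion.

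I do not expect a genuine obstacle here; the only points demanding a little care are bookkeeping. One must confirm that every functor in question preserves the vertex set verbatim, so that comparing $\delta_{V_{H}}\circ f_{V}$ with $\delta_{V_{G}}$ makes sense on the nose, and one must note that the well-definedness of $\mathfrak{C}(f)$, $\mathfrak{B}(f)$, and $\mathfrak{H}^{2}\mathfrak{E}(f)$ as maps of path complexes is inherited from the already-established functoriality into $\mathcal{P}$ rather than being something to re-prove.
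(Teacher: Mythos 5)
Your proposal is correct and matches the paper's own (one-line) proof, which likewise appeals to the fact that the weight functions on $\mathfrak C(G)$, $\mathfrak B(G)$, and $\mathfrak H^{2}\mathfrak E(G)$ are the given $\delta_V$ on the unchanged vertex set, so that the vertex-weighted condition of Definition \ref{d3.2}(i) is literally the weighted-morphism condition of Definition \ref{d2.1}(ii). Your write-up simply makes explicit the bookkeeping the paper leaves implicit; no difference in approach and no gap.
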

\begin{proof} It follows from the definition of morphisms in the category 
$\mathcal{DH}^v$ and from the definition of the weight function on the objects of
corresponding  categories.
\end{proof}

For any  weighted directed hypergraph $G^{\delta}=(V,E,\delta_V)$,   we call
 \begin{equation}\label{3.2}
\begin{matrix}
 H_*^{\mathbf {c/v}}(G^{\delta})\colon = H_*^{\mathbf w}(\mathfrak C^{v}(G^{\delta})), \\
 H_*^{\mathbf {b/v}}(G^{\delta})\colon = H_*^{\mathbf w}(\mathfrak B^{v}(G^{\delta})), \\
 H_*^{2/v}(G^{\delta})\colon = H_*^{\mathbf w}
([\mathfrak H^{2} \mathfrak E]^v(G^{\delta})), \\
\end{matrix}
\end{equation}
 as \emph{weighted path homology groups},  \emph{weighted bold path homology groups}, 
 and \emph{weighted non-directed path homology groups of density two}, respectively. 

Now we  reformulate   Lemma 3.5, Lemma 3.10, and  Lemma 3.16 of  
\cite{Miarx} for the case of functors $\mathfrak C^{v}$, 
$\mathfrak B^{v}$, and $[\mathfrak H^{2} \mathfrak E]^v$ from  
$\mathcal{DH}^{v}$ to $\mathcal P^{\delta}$. Let    $G^{\delta}=(V,E,\delta_V)$ 
be a weighted directed hypergraph and  
$I_1=(0\to 1)$.

\begin{lemma}\label{l3.10}
 i) There is a natural isomorphism
$\mathfrak{C}^v(G^{\delta}\Box I_1)
\cong{[[\mathfrak{C}^v(G^{\delta})]^{\uparrow}]}^{\delta}$
of weighted 
path complexes.

ii) There exists  an inclusion 
$\lambda^v\colon \left[[\mathfrak{B}^v(G^{\delta})]
^{\uparrow}\right]^{\delta}\to \mathfrak{B^v}(G^{\delta}\Box I_1)$
of weighted path complexes. The restrictions of 
$\lambda^v$ to the  images of the morphisms $i_{\bullet}$ and $j_{\bullet}$,   
defined in Section \ref{S2}, 
are the natural identifications. 

iii) There is the  inclusion 
$
\mu^v\colon
 \left[\left[[\mathfrak H^{ 2}\circ\mathfrak{E}]^v(G^{\delta})\right]^{\uparrow}\right]^{\delta}\to 
\left[\mathfrak{H}^{2}\circ \mathfrak E\right]^v(G^{\delta}\Box I_1)
$
of weighted path complexes.
\end{lemma}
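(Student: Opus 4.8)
The plan is to handle all three parts by a single mechanism. Since the weight functions on both sides are, by construction, the common extension $\delta_{V\times J}(v,0)=\delta_{V\times J}(v,1)=\delta_V(v)$, the weighted assertions reduce at once to assertions about the underlying path complexes, and the comparison map is the identity on vertices under the identification $V\times J\cong V\coprod V'$. So the whole content is to compare, for each functor $\mathfrak F\in\{\mathfrak C,\mathfrak B,\mathfrak H^2\circ\mathfrak E\}$, the admissible paths of $\mathfrak F(G\Box I_1)$ with those of $[\mathfrak F(G)]^\uparrow=\mathfrak F(G)\cup\mathfrak F(G)'\cup\mathfrak F(G)^\#$. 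First I would record the two structural features of $G\Box I_1$ for $I_1=(0\to1)$ that drive everything: its arrows are of three types, the level-$0$ arrows $A\times0\to B\times0$ and level-$1$ arrows $A\times1\to B\times1$ coming from $(A\to B)\in E_G$, and the vertical arrows $A\times0\to A\times1$ with $A\in\mathbf P_{01}(G)$; and, since $E_{I_1}$ has the single arrow $0\to1$, any admissible path changes level at most once and only in the direction $0\to1$. Thus every path of $\mathfrak F(G\Box I_1)$ splits as a (possibly empty) level-$0$ segment, one vertical transition, and a (possibly empty) level-$1$ segment, which is exactly the shape of the paths in $\mathfrak F(G)$, $\mathfrak F(G)'$, and $\mathfrak F(G)^\#$.

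For part (i) I would first check the inclusion $[\mathfrak C(G)]^\uparrow\hookrightarrow\mathfrak C(G\Box I_1)$. A path of $\mathfrak C(G)$ (resp. $\mathfrak C(G)'$) uses only level-$0$ (resp. level-$1$) arrows, hence is admissible; a $\#$-path $q^\#_k=(i_0\dots i_k\,i_k'\,i_{k+1}'\dots i_n')$ uses level-$0$ arrows, then the vertical transition $(i_k,0)\to(i_k,1)$, then level-$1$ arrows, and the vertical step is admissible because every vertex $i_k$ lies in some $A\in\mathbf P_{01}(G)$, so $A\times0\to A\times1$ realizes $(i_k,0)\to(i_k,1)$. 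The converse (surjectivity, needed for the asserted \emph{isomorphism}) is the delicate point: given an admissible path of $\mathfrak C(G\Box I_1)$, the at-most-one-crossing observation writes it as a level-$0$ segment, a vertical transition $(u,0)\to(w,1)$, and a level-$1$ segment, and one must show this is forced into $\#$-form, i.e. that the crossing can be taken at a single underlying vertex $u=w$. This is precisely where the argument must exploit a feature of $\mathfrak C$ not shared by the other two functors; the main obstacle is to rule out (or absorb) crossings $(u,0)\to(w,1)$ with $u\ne w$ but $u,w$ lying in a common $A\in\mathbf P_{01}(G)$, and I would isolate this as the key lemma of part (i).

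For parts (ii) and (iii) only inclusions are asserted, which matches the fact that the concatenation structure of $\mathfrak B$ and the density-two structure of $\mathfrak H^2\circ\mathfrak E$ make the box-product complexes strictly larger than the $\uparrow$-complexes. I would define $\lambda^v$ and $\mu^v$ to be the identity on vertices, sending level-$0$ paths, level-$1$ paths, and $\#$-paths of the source to the same vertex sequences in the target; the content is to verify admissibility. For $\lambda^v$ I would take a bold path $q\in P^b_G$ with presentation $p_0\vee v_0w_0\vee\cdots\vee v_rw_r\vee p_{r+1}$ and check that splicing in the single vertical step yields a bold path of $G\Box I_1$: the level-$0$ and level-$1$ arrows reproduce the edge sequence $(A_j\to B_j)$ on each level, the vertical transition at $i_k$ is supplied by $A\times0\to A\times1$ for some $A\in\mathbf P_{01}(G)$ with $i_k\in A$, and the conditions $B_j\cap A_{j+1}\ne\emptyset$ and all concatenations remain well defined. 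For $\mu^v$ the check is easier: the edges of $\mathfrak E(G\Box I_1)$ are $(A\cup B)\times0$, $(A\cup B)\times1$, and $A\times0\cup A\times1$, so two consecutive vertices of a level segment lie in a common edge exactly as in $\mathfrak H^2\mathfrak E(G)$, while the vertical pair $(i_k,0),(i_k,1)$ lies in $A\times0\cup A\times1$ for $i_k\in A$. For the refinement in (ii) I would note that on the images of $i_\bullet$ and $j_\bullet$ the map $\lambda^v$ sees only level-$0$ and level-$1$ paths, on which it is by construction the level-identification, so the restrictions are the natural identifications. The main obstacle in (ii)–(iii) is the bookkeeping for $\lambda^v$: confirming that inserting the vertical step into the concatenation decomposition preserves admissibility without violating any well-definedness condition.
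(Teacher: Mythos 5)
Your opening reduction---that both sides carry the same weight function $\delta_{V\times J}(v,0)=\delta_{V\times J}(v,1)=\delta_V(v)$, so the weighted claims collapse to claims about the underlying path complexes---is precisely the paper's entire argument; the paper then simply quotes Lemmas 3.5, 3.10 and 3.16 of \cite{Miarx} for those underlying unweighted statements and stops. You instead attempt to re-prove the unweighted statements from scratch, and that is where the proposal has a genuine gap: for part (i) you establish only the inclusion $[\mathfrak{C}(G)]^{\uparrow}\hookrightarrow\mathfrak{C}(G\Box I_1)$ and explicitly defer the surjectivity to an unproven ``key lemma.'' This deferred step is not routine, and with the definitions as stated in the present paper it cannot be closed by your vertex-identity comparison map at all. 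Concretely, take any $A\in\mathbf{P}_{01}(G)$ containing two distinct vertices $u\ne w$. The two-vertex path $\left((u,0),(w,1)\right)$ is admissible in $\mathfrak{C}(G\Box I_1)$, since the vertical arrow $A\times 0\to A\times 1$ has $(u,0)$ in its origin and $(w,1)$ in its end; but every crossing path of $[\mathfrak{C}(G)]^{\uparrow}$ has the form $(i_0\dots i_k\, i_k'\, i_{k+1}'\dots i_n')$, i.e.\ crosses at a duplicated vertex, so the only two-vertex crossing paths there are those of the form $(u,u')$. Hence your natural map is injective but not onto, and no ``feature of $\mathfrak{C}$ not shared by the other two functors'' will force $u=w$. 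Closing part (i) therefore requires the precise conventions of \cite{Miarx}, which is exactly what the paper's one-line proof leans on; within your self-contained argument, part (i) is not proved.

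Two secondary points. First, your blanket structural claim that every admissible path of $\mathfrak{F}(G\Box I_1)$ changes level at most once and only in the direction $0\to 1$ is false for $\mathfrak{F}=\mathfrak{H}^{2}\circ\mathfrak{E}$: the hypergraph edges $A\times 0\cup A\times 1$ of $\mathfrak{E}(G\Box I_1)$ are undirected, so a density-two path may step from level $1$ back to level $0$. This does not damage parts (ii)--(iii), where only inclusions are asserted and one merely checks admissibility of the images of level-$0$, level-$1$ and $\#$-paths, but it shows the ``single mechanism'' framing is too strong. Second, in part (ii) the phrase ``the vertical transition at $i_k$ is supplied by $A\times 0\to A\times 1$ for some $A\in\mathbf{P}_{01}(G)$ with $i_k\in A$'' is insufficient: an arbitrary such $A$ need not contain the whole segment of the bold presentation preceding (or following) the crossing, so the condition that this segment be a path on $(\mbox{end of previous edge})\cap(\mbox{origin of vertical edge})$ can fail. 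The correct choice is dictated by the presentation itself, e.g.\ $A=A_j$ (or $B_{j-1}$) where the crossing index lies in the segment $p_j\in P_{B_{j-1}}\cap P_{A_j}$; with that choice your splicing argument for $\lambda^v$, and the analogous easier check for $\mu^v$, go through.
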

\begin{proof} By the definition, the weight functions on the vertex set  
$V\times J$  are given by the function 
$\delta_{V\times J}(v,0)=\delta_{V\times J}(v,1)=\delta_{V}(v)$ and 
hence,   coincide for all path camplexes  above. Now the result follows 
from \cite{Miarx}. 
\end{proof}

Let $H^{\chi/\mathbf{v}}_*(G^{\delta})$ denote one of the homology groups 
$H_*^{\mathbf {c/v}}(G^{\delta})$, 
 $H_*^{\mathbf {b/v}}(G^{\delta})$, 
 $H_*^{2/v}(G^{\delta})$ defined in (\ref{3.2}). 

\begin{theorem}\label{3.11} Let $f^v, g^v\colon 
G^{\delta}_1=(V_1,E_1,\delta_{V_1})\to 
G^{\delta}_2=(V_2, E_2, \delta_{V_2})$ be  
vertex-weigh\-ted homotopic morphisms 
	of weighted  directed hypergraphs and assume that all elements of the set 
$\mathbb K=\{k\in R| k=\delta_{V_1}(v),   v\in V_1\}$
 are invertible in $R$. Then the induced homomorphisms 
$$
f_*^v, g_*^v\colon H^{\chi/\mathbf{v}}_*(G_1^{\delta}, R)\to G^{\chi/\mathbf{v}}_*(G_2^{\delta}, R)
$$ 
coincide.  
\end{theorem}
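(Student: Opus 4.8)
The plan is to reduce Theorem \ref{3.11} to the already-established Theorem \ref{t2.6} exactly as Theorem \ref{t3.7} was reduced to Theorem \ref{t2.6} via Lemma \ref{l3.6}. The key structural facts are in place: Lemma \ref{l3.10} provides, for each of the three functors $\mathfrak C^v$, $\mathfrak B^v$, and $[\mathfrak H^2\mathfrak E]^v$, a comparison (an isomorphism in case i, inclusions $\lambda^v$ and $\mu^v$ in cases ii and iii) between ${[[\mathfrak F^v(G^\delta)]^\uparrow]}^\delta$ and $\mathfrak F^v(G^\delta\Box I_1)$, where $\mathfrak F$ denotes the functor under consideration. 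Since all three cases are handled uniformly, I would write the proof once for a generic functor $\mathfrak F^v\in\{\mathfrak C^v,\mathfrak B^v,[\mathfrak H^2\mathfrak E]^v\}$ and the corresponding homology $H^{\chi/\mathbf v}_*$.

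First I would reduce to the case of a single one-step homotopy. By Definition \ref{d3.2} and the definition of homotopy for directed hypergraphs, $f^v\simeq g^v$ means there is a finite chain of vertex-weighted morphisms linked by one-step homotopies; since induced maps on homology compose, it suffices to treat $f^v\simeq_1 g^v$. By Proposition \ref{p3.4}, the one-step homotopy $F$ between $f_0=f$ and $f_1=g$ can be regarded as a vertex-weighted morphism $F^v\colon G^\delta\Box I_1\to G_2^\delta$. Next I would apply the functor $\mathfrak F^v$ to $F^v$ to obtain a weighted morphism of path complexes $\mathfrak F^v(F^v)\colon \mathfrak F^v(G^\delta\Box I_1)\to \mathfrak F^v(G_2^\delta)$, and precompose with the comparison map from Lemma \ref{l3.10} to produce a weighted morphism $\widetilde F\colon {[[\mathfrak F^v(G_1^\delta)]^\uparrow]}^\delta\to \mathfrak F^v(G_2^\delta)$. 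The commutativity statements in Lemma \ref{l3.10} (that the restrictions of $\lambda^v$, $\mu^v$ to the images of $i_\bullet$ and $j_\bullet$ are the natural identifications) guarantee that $\widetilde F\circ [i_\bullet]=\mathfrak F^v(f^v)$ and $\widetilde F\circ [j_\bullet]=\mathfrak F^v(g^v)$, so $\widetilde F$ is genuinely a weighted one-step homotopy between the images of $f^v$ and $g^v$ in $\mathcal P^\delta$.

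Having produced a weighted one-step homotopy between $\mathfrak F^v(f^v)$ and $\mathfrak F^v(g^v)$, I would invoke Theorem \ref{t2.6}. The invertibility hypothesis needed there is that $\delta_V(i)$ is invertible for every vertex $i$ of the source path complex; this is exactly the assumption that every $k\in\mathbb K=\{\delta_{V_1}(v)\mid v\in V_1\}$ is invertible in $R$, since the vertex set of each $\mathfrak F^v(G_1^\delta)$ is $V_1$ with the same weight function $\delta_{V_1}$. Theorem \ref{t2.6} then yields that the induced maps on weighted path homology $H^{\mathbf w}_*(\mathfrak F^v(G_1^\delta))\to H^{\mathbf w}_*(\mathfrak F^v(G_2^\delta))$ coincide, which by the definitions in (\ref{3.2}) is precisely the assertion $f_*^v=g_*^v$ on $H^{\chi/\mathbf v}_*$.

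The only subtlety—and the step I expect to require the most care—is the cases ii and iii, where Lemma \ref{l3.10} supplies only an \emph{inclusion} $\lambda^v$ (resp. $\mu^v$) rather than an isomorphism. The chain-homotopy construction in Theorem \ref{t2.6} is built from the operator $\tau$ landing in $\mathcal R^{reg}_{n+1}(P^\uparrow)$ and then applying $F_*$; I must check that composing with the inclusion does not disturb the identity $\partial^w\tau+\tau\partial^w=v'-v$ of Lemma \ref{l2.5}, i.e. that the image of $\tau$ genuinely lands in the subcomplex $\mathfrak F^v(G^\delta\Box I_1)$ so that $\widetilde F=\mathfrak F^v(F^v)\circ(\text{inclusion})$ is defined on all of ${[[\mathfrak F^v(G_1^\delta)]^\uparrow]}^\delta$. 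This is exactly the containment already verified abstractly in the proof of Theorem \ref{t2.6} (the argument that $\tau(v)\in\Omega^w_{n+1}(P^\uparrow)$), so no new computation is needed; one only records that the inclusion is a weighted chain map compatible with $i_\bullet$ and $j_\bullet$, and the rest follows verbatim from Theorem \ref{t2.6}, giving the result uniformly in $\chi$.
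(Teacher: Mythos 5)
Your proposal is correct and follows exactly the route the paper intends: its entire proof of Theorem \ref{3.11} is the one-line reduction ``It follows from Lemma \ref{l3.10} and Theorem \ref{t2.6},'' and your expansion (reduction to a one-step homotopy, Proposition \ref{p3.4}, composing $\mathfrak F^v(F^v)$ with the comparison maps of Lemma \ref{l3.10}, and matching the invertibility hypothesis on $\mathbb K$ to that of Theorem \ref{t2.6}) supplies precisely the omitted details, including the correct observation that the inclusions $\lambda^v,\mu^v$ point the right way for the composite homotopy to be defined on all of ${[[\mathfrak F^v(G_1^\delta)]^{\uparrow}]}^{\delta}$.
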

\begin{proof} It follows from Lemma \ref{l3.10} and Theorem \ref{t2.6}.
\end{proof}

All data generated or analysed during this study are included in this published article.

Yuri Muranov, 
Faculty of Mathematics and 
Computer Science
University  of Warmia and Mazury in Olsztyn, 
S\l oneczna 54, 
10-710 Olsztyn, Poland 

 E-mail: muranov@matman.uwm.edu.pl
\smallskip

Anna Szczepkowska, Faculty of Mathematics and 
Computer Science
University of Warmia and Mazury in Olsztyn, 
S\l oneczna 54,
10-710 Olsztyn, Poland 

 E-mail: anna.szczepkowska@matman.uwm.edu.pl
\smallskip

Vladimir Vershinin, 
D\'epartement des Sciences Math\'ematiques
Universit\'e de Montpellier,  
Place Eug\'ene Bataillon,
34095 Montpellier
cedex 5, France

E-mail: vladimir.verchinine@univ-montp2.fr
\end{document}